\numberwithin{equation}{section}
\newtheorem{theorem}{Theorem}[section]
\newtheorem{remark}[theorem]{Remark}
\newtheorem{proposition}[theorem]{Proposition}
\theoremstyle{definition}
\newtheorem{definition}{Definition}[section]
\newcommand\E{E}
\newcommand\D{D}
\newcommand\C{B}
\newcommand{\bea}{\begin{eqnarray}}
\newcommand{\eea}{\end{eqnarray}}
\renewcommand{\lim}{\varinjlim}
\newcommand{\Z}{{\mathbb{Z}}}
\newcommand{\R}{{\mathbb{R}}}
\newcommand{\Q}{{\mathbb{Q}}}
\newcommand{\A}{{\mathcal{A}}}
\newcommand{\emme}{{\mathcal M}}
\newcommand{\elle}{{\mathcal L}}
\begin{document}
\title[The homotopy type of the complement  of a toric arrangement]{A  differential algebra and the  homotopy type of the complement  of a toric arrangement}
\author{Corrado De Concini, Giovanni Gaiffi}
\maketitle
\begin{abstract}
We show that the rational homotopy type of the complement of a toric arrangement  is completely determined by two sets of combinatorial data.  This is obtained by introducing  a  differential graded  algebra over \(\Q\)    whose minimal model is equivalent to the Sullivan minimal model of \(\A\). 
\end{abstract}

\section{Introduction}
Let $T\simeq G_m^n$ be a complex $n$ dimensional algebraic torus  and let us denote by $X^*(T)\simeq \mathbb Z^n$ its character group.

A {\em layer} in \(T\) is the subvariety 
$$\mathcal K_{\Gamma,\phi}=\{t\in T|\, \chi(t)=\phi(\chi),\, \forall \chi\in \Gamma\}$$
where $\Gamma$ is a split direct summand of  $X^*(T)$ and  $\phi:\Gamma \to \mathbb C^*$ is a homomorphism.

A  toric arrangement \(\A\) is given by  a  finite set of  layers \(\A=\{\mathcal K_{1},...,\mathcal K_{m}\}\) in $T$; if for every \(i=1,...,m\) the layer \(\mathcal K_i\)  has codimension 1
the arrangement  \(\A\) is called  {\em divisorial}.

We will denote by \(\emme(\A)\) the complement \(T-\bigcup_i \mathcal K_i\) of the arrangement.  
We notice that if we consider the saturation $\tilde {\mathcal A}$ of \(\A\) , i.e. the arrangement consisting of all the  layers which are obtained as connected components of intersections of layers in $\mathcal A$, we have \(\emme(\A)=\emme(\tilde{\mathcal A}) \).

The purpose of this note is to show that the rational homotopy type of \(\emme(\A)\) is completely determined by
 
\begin{enumerate} \item The partially ordered set $\tilde {\mathcal A}$  ordered by reverse inclusion.
\item The set  of lattices $\Gamma\subset X^*(T)$ for $\mathcal K_{\Gamma,\phi}\in \tilde {\mathcal A}$.
\end{enumerate} 
We will call these data {\em the combinatorial data} of \(\A\).

This is obtained by introducing  an object that may be of independent interest: a differential graded  algebra over \(\Q\), defined using the combinatorial data of \(\A\),  whose minimal model is equivalent to the Sullivan minimal model of \(\A\).  In particular we have that  its cohomology is isomorphic to the rational cohomology of \(\emme(\A)\).

Before giving a sketch of our construction, we  recall some  previous results on this subject.

As far as we are aware,  the results  regarding the (rational) homotopy  of \(\emme(\A)\) have been obtained in the divisorial case. In this case, in \cite{DCPtoric} De Concini and Procesi determined the generators of the rational cohomology modules  of \(\emme(\A)\), as well as the ring structure in the case of totally unimodular arrangements. By a rather general approach,  Dupont in \cite{Dup} proved the rational formality of \(\emme(\A)\). In turn, in \cite{CDDMP}, it was shown extending the results in \cite{callegarodelucchi, callegaro2019erratum} and  \cite{Pagariatwo} that 
the data needed in order to state the presentation of the rational  cohomology ring of \(\emme(\A)\)  is fully encoded in the partially ordered set $\tilde {\mathcal A}$. It follows that the combinatorics of the poset $\tilde {\mathcal A}$ determines the rational homotopy  of \(\emme(\A)\).



Our approach to the study of the rational homotopy type and of the cohomology of \(\emme(\A)\)  in full generality involves the construction of projective wonderful models for \(\emme(\A)\).

Results analogous to those in this note were previously obtained in ~\cite{DCP1} for arrangements of linear subspaces in a projective space using the construction of wonderful models for subspace arrangements and the fundamental results of Morgan in \cite{Mor}. In particular Morgan introduced, in the case of a compactification $V$ of a complex algebraic variety X such that $V\setminus X=D$ is a divisor with normal crossing, a differential graded algebra (which we are going to call a Morgan differential algebra)  
whose minimal model is equivalent to the Sullivan minimal model of \(X\).


In the toric case the idea is to do similar considerations using  the  \emph{projective} wonderful models of \(\emme(\A)\) we constructed  in   \cite{DCG1}  and the presentation by generators and relations of  the  integer cohomology rings of these models and of the strata in their boundary given in \cite{DCG2}.

Indeed   in \cite{mocipagaria2020} for each of these models these presentations were used to  describe its  Morgan differential algebra which determines the rational homotopy type  of \(\emme(\A)\). 

However  the projective models described  in  \cite{DCG1} do  not depend only  on the combinatorial data of the toric arrangement \(\A\) but also on some extra choices  in  the construction process and indeed the  differential Morgan algebras one obtains  also depend on these choices.

To overcome this problem we are going to construct a new  differential graded algebra as a direct limit of the   differential Morgan algebras of the projective wonderful models described  in  \cite{DCG1}. This algebra, based on the notion of the  \emph{ring of conditions of $T$},  is rather large and it is not the Morgan algebra of any compactification of $\emme (\A)$. 

 However we show in  Proposition \ref{quasifinale}, that it is quasi isomorphic to any of the Morgan algebras of the projective wonderful models of $\emme(\A)$  and it has a  simple presentation which depends only on the combinatorial data of \(\A\).

Let us describe more in detail the structure of this paper. 
First  (in Section \ref{sec:conditions}) we briefly provide   a self contained presentation of the ring of conditions  \(\mathcal C(T)\) of the torus \(T\). 
We recall that it  was shown by Fulton and Sturmfels in \cite{FS}  that this ring over  \(\Q\)   is isomorphic to the McMullen polytope algebra (see \cite{McMullen}, and \cite{Morelli} for a similar construction) and is the direct limit of the rational Chow rings of all the compactifications of \(T\). For other descriptions of the ring of conditions of the torus the reader can see  for instance  \cite{Br}, \cite{KavKhov} (and \cite{DCPII}  where rings of conditions appeared in a more general setting).  
 
In Section \ref{sec:conditions} we first introduce  an  equivariant version \(B_T(T)\) of the ring of conditions as follows. 
Let us consider the lattice of one parameter subgroups  $X_*(T)=\hom (X^*(T),\mathbb Z)$  and the vector space $V=X_*(T)\otimes_{\mathbb Z} \mathbb R$. We denote by \(\Sigma\) the space of  the continuous functions \(f\) on \(V\) such that  \(f(X_*(T))\subset \Q\)  and there exists a  smooth projective fan such that \(f\) restricted to every face of this fan is linear. Then we consider the    algebra \(B_T(T)\) of continuous functions  \(V\)  generated by \(\Sigma\) and finally we obtain  \(\mathcal C(T)\) as the quotient of \(B_T(T)\) modulo the ideal generated by a basis of \(X^*(T)\).

Then in Section \ref{sec:differentialgraded}  we construct a differential graded algebra \(\mathcal C\) whose degree 0 term is \(\mathcal C(T)\).
This algebra is the direct limit of all the differential graded Morgan algebras associated to the compactifications of \(T\).

So far the toric arrangement \(\A\) has not been taken into account. It appears in Section \ref{sec:toric}, where we first recall from \cite{DCG1} the construction of the projective wonderful model \(Y(X_{\mathcal F})\) associated to \(\tilde \A\) and to a  suitable smooth projective fan \(\mathcal F\). Then we   recall from \cite{DCG2} the presentation of the cohomology of  \(Y(X_{\mathcal F})\) and of its strata in the boundary and we construct, following  Moci and Pagaria  (see  \cite{mocipagaria2020}),  the  Morgan differential algebra \(N_{\mathcal F}\) for 
\(Y(X_{\mathcal F})\). 

Finally, in Section \ref{sec:limit} we introduce the differential  graded algebra \(N\) as a direct limit of the algebras \(N_{\mathcal F}\)
and  we present it by generators and relations (see Theorem \ref{teo:limite})  starting from \(\mathcal C \otimes \mathcal B\), where \(\mathcal C\) is the limit algebra mentioned above and \(\mathcal B\) is a quotient of a Weyl algebra.  
We immediately obtain  Proposition \ref{quasifinale} (the minimal model of \(N\) is isomorphic to the minimal model of \(\emme(\A)\))  and, since the generators and relations of \(N\) depend only on the combinatorial data of  \(\A\), we deduce Theorem \ref{finale} on the rational homotopy type.

\section{The ring of conditions, recollections}
\label{sec:conditions}

Let $T\simeq (C^*)^n$ be a complex $n$ dimensional algebraic torus. Denote by $X^*(T)\simeq \mathbb Z^n$ its character group and by $X_*(T)=\hom (X^*(T),\mathbb Z)$ its lattice of one parameter subgroups. We set $V=X_*(T)\otimes_{\mathbb Z} \mathbb R$. 

We take a rational smooth projective  fan $\mathcal F$  in $V$ and let  $\Gamma_{\mathcal F}=\{c_1,\ldots c_N\}$ be vertices, that is the set of primitive vectors in the one dimensional cones (rays) of $\mathcal F$. Any cone $C\in \mathcal F$ is of the form $$C=C(c_{i_1}, \dots c_{i_k})=\{v=\sum_{r=1}^ka_rc_{i_r}|\ a_r\geq 0\}$$ with $c_{i_1}, \dots c_{i_k}$  the basis of  a split direct summand in $X_*(T)$. 

Let us take  variables $x_c,$ $c\in  \Gamma_{\mathcal F}$, and in the polynomial ring $\mathbb Q[x_c]_{|c\in  \Gamma_{\mathcal F}}$ take the ideal $I_{\mathcal F}$ generated by the monomials $m_J=\prod_{c\in J}x_c$ for all subsets of $J\subset \Gamma_{\mathcal F}$ for which $C(c_j)_{j\in J}$ is not  a cone in $\mathcal F$.  
Take the algebra $A_\mathcal F:=\mathbb Q[x_{c_1},\ldots x_{c_N}]/I_{\mathcal F}$. $A_\mathcal F$ is the Stanley-Reisner ring of $\mathcal F$ and it   is the equivariant cohomology ring of the toric variety $X_{\mathcal F}$ corresponding to $\mathcal F$ (see \cite{Br} Corollary 1.3 and Proposition 2.2).
The algebra $A_\mathcal F$ inherits a grading from the grading of $\mathbb Q[x_{c_1},\ldots x_{c_N}]$ in which deg $x_c=2$ for all $c\in \Gamma_{\mathcal F}$. The degree 2 part is spanned by the classes of the elements $x_c$ and we denote it  by $S_\mathcal F$.
We may associate to  each $x_c$ the  function $s_c$ on $V$ defined as follows. We set  $s_c(d)=\delta_{c,d}$. Then for $v\in V$ there exist a unique  cone $C=C(c_{i_1}, \dots c_{i_k})\in \mathcal F$ such that $v$ lies in the relative interior of  $C$, so $v=\sum_{r=1}^ka_rc_{i_r}$, $a_r>0$. We then set 
$$s_c(v)=\begin{cases}0\ \ \text {if}\ \ c\neq c_{i_r} \ \forall r\\ a_r\ \ \text {if}\ \ c=c_{ i_r} \end{cases}$$
Notice that $s_c$ is continuous so that sending \(x_c\) to \(s_c\)   we get a homomorphism 
 $$\rho_{\mathcal F}:\mathbb Q[x_{c_1},\ldots x_{c_N}]\to C(V),$$
where $C(V)$ is the algebra of continuous functions on $V$.

The functions $s_c$ are linearly independent in $C(V)$. Their span  will be identified with  $S_\mathcal F$ and denoted by the same letter.

 The following result is well known and we prove it for completeness.
\begin{proposition} \label{conti}\begin{enumerate}
\item The space  $S_{\mathcal F}\subset C(V)$ is the space of continuous functions on $V$ with the property that their restriction to 
 each cone of $\mathcal F$ is linear. 
 \item The ideal $I_{\mathcal F}$ is the kernel of $\rho_{\mathcal F}$. In particular we obtain an inclusion $$\mu_{\mathcal F}:A_{\mathcal F}\rightarrow  C(V).$$
 \item Let $\mathcal G$ be a smooth  refinement of $\mathcal F$, that is every cone in $\mathcal F$ is subdivided by  cones in $\mathcal G$. 
 We know that there is a map
 $$\gamma_{\mathcal G}^{\mathcal F}:A_{\mathcal F}\to A_{\mathcal G}.$$
 Then \begin{equation}\label{compo}\mu_{\mathcal F}=\mu_{\mathcal G}\gamma_{\mathcal G}^{\mathcal F}.\end{equation}
 \end{enumerate}\end{proposition}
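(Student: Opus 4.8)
The plan is to prove the three items in order, since (2) and (3) will use (1). For item (1), I would argue both inclusions. The inclusion $S_{\mathcal F}\subseteq\{f\in C(V): f|_C \text{ linear for each cone } C\in\mathcal F\}$ is immediate from the definition of $s_c$: on the relative interior of a cone $C=C(c_{i_1},\dots,c_{i_k})$, we have $s_c(\sum a_r c_{i_r})=a_r$ if $c=c_{i_r}$ and $0$ otherwise, so any linear combination $\sum\lambda_c s_c$ restricts to the linear function $v\mapsto\sum_r \lambda_{c_{i_r}}a_r$ on $C$ (one should note this is the restriction of a genuine linear functional because $c_{i_1},\dots,c_{i_k}$ extend to a basis of $X_*(T)$, so the coordinates $a_r$ are linear). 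For the reverse inclusion, given $f$ continuous and linear on each cone, set $\lambda_c=f(c)$ for each ray generator $c\in\Gamma_{\mathcal F}$ and check that $f=\sum_c\lambda_c s_c$: both sides are continuous, and on the relative interior of each maximal cone they agree because $f$ is linear there and determined by its values on the $c_{i_r}$, which are the $\lambda_{c_{i_r}}$; since the relative interiors of the maximal cones are dense in $V$ and both functions are continuous, they agree everywhere.

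For item (2), the inclusion $I_{\mathcal F}\subseteq\ker\rho_{\mathcal F}$ is a direct computation: if $J\subseteq\Gamma_{\mathcal F}$ does not span a cone of $\mathcal F$, then for any $v\in V$, lying in the relative interior of some cone $C=C(c_{i_1},\dots,c_{i_k})\in\mathcal F$, the set $\{c_{i_1},\dots,c_{i_k}\}$ cannot contain $J$ (else $J$ would span a face of $C$, hence a cone of $\mathcal F$), so some $c\in J$ has $s_c(v)=0$, whence $m_J(v)=\prod_{c\in J}s_c(v)=0$. For the reverse inclusion $\ker\rho_{\mathcal F}\subseteq I_{\mathcal F}$, I would pass to the quotient and show that $\bar\rho_{\mathcal F}\colon A_{\mathcal F}\to C(V)$ is injective. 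Here I would invoke the identification of $A_{\mathcal F}$ as the Stanley–Reisner ring and use the standard $\mathbb Q$-vector-space basis of $A_{\mathcal F}$ given by monomials $\prod x_c^{a_c}$ supported on the rays of a single cone of $\mathcal F$; under $\rho_{\mathcal F}$ these monomials map to the functions $v\mapsto\prod_c s_c(v)^{a_c}$, and I would check these are linearly independent in $C(V)$ by a support/leading-term argument: a monomial supported on a cone $C$ of dimension $d$ is nonzero precisely on the union of (relative interiors of) cones containing $C$, and restricted to the relative interior of $C$ itself it is a nonzero homogeneous polynomial of degree $\sum a_c$ in the $d$ linear coordinates; separating by the minimal carrying cone and then by degree and then by the monomial within a fixed cone yields linear independence. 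The injectivity gives the inclusion $\mu_{\mathcal F}\colon A_{\mathcal F}\hookrightarrow C(V)$.

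For item (3), since a refinement $\mathcal G$ of $\mathcal F$ shares the same underlying space $V$, and since by item (2) both $\mu_{\mathcal F}$ and $\mu_{\mathcal G}$ are inclusions into the fixed algebra $C(V)$, it suffices to check the identity $\mu_{\mathcal F}=\mu_{\mathcal G}\gamma_{\mathcal G}^{\mathcal F}$ on the algebra generators, i.e. on the classes $x_c$ with $c\in\Gamma_{\mathcal F}$. By item (1) applied to $\mathcal G$, $\mu_{\mathcal G}\gamma_{\mathcal G}^{\mathcal F}(x_c)$ is the unique $\mathcal G$-piecewise-linear function whose value on each $c'\in\Gamma_{\mathcal G}$ is the value of $s_c$ (as an $\mathcal F$-piecewise-linear function) at $c'$; but $s_c$, viewed in $C(V)$, is already continuous and linear on each cone of $\mathcal G$ (every cone of $\mathcal G$ lies inside a cone of $\mathcal F$, on which $s_c$ is linear), so by the uniqueness in item (1) this function equals $s_c=\mu_{\mathcal F}(x_c)$. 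Since both sides are ring homomorphisms agreeing on generators, they are equal.

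The main obstacle is the injectivity half of item (2): one must be careful to use the correct monomial basis of the Stanley–Reisner ring $A_{\mathcal F}$ (monomials whose support is a face of the fan, with arbitrary exponents) and to organize the linear-independence argument in $C(V)$ by first separating according to the minimal cone carrying the support and then by total degree, exploiting that on the relative interior of that carrying cone the $s_c$ restrict to independent linear coordinates. Everything else is essentially bookkeeping with piecewise-linear functions on fans.
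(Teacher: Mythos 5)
Your proof is correct. Items (1) and (3) follow essentially the same route as the paper: for (1) the paper simply observes that any $\mathcal F$-piecewise-linear $f$ equals $\sum_c f(c)s_c$, and for (3) it records the formula $\gamma_{\mathcal G}^{\mathcal F}(x_c)=\sum_d(\rho_{\mathcal F}(x_c)(d))\,y_d$ and leaves the verification to the reader, which is exactly the uniqueness argument you supply. For the harder half of item (2) --- that $\ker\rho_{\mathcal F}\subseteq I_{\mathcal F}$ --- the paper uses a slicker device than your basis/minimal-carrying-cone separation: given a nonzero $a\in A_{\mathcal F}$ it multiplies by the product $x_{c_{i_1}}\cdots x_{c_{i_n}}$ over the rays of a well-chosen maximal cone $\overline C$ to obtain a nonzero $b$ which is represented by a polynomial in only the $n$ variables attached to $\overline C$; the restriction of $\mu_{\mathcal F}(b)$ to the interior of $\overline C$ is then literally the evaluation of that polynomial in the $n$ coordinate functions, hence nonvanishing, so $\mu_{\mathcal F}(a)\neq 0$. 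Both arguments rest on the same key fact --- that on the relative interior of a maximal cone the $s_c$ become independent linear coordinates --- but the paper's multiplication trick avoids the bookkeeping over cones and degrees that your separation scheme requires. If you do pursue your route, take care to pick a \emph{minimal} carrying cone among those supporting a monomial with nonzero coefficient: restricting to its relative interior then kills everything supported on non-faces, and by minimality there is nothing on a proper face left to interfere, after which the degree/monomial comparison in the polynomial ring on that cone finishes the job.
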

 \begin{proof}
The first claim    is clear since it is immediate to check that  any function $f\in C(V)$ whose 
 restriction to 
 each cone of $\mathcal F$ is linear can written as
 $$f=\sum_cf(c)s_c.$$

 To see the second claim recall that for  any cone $C\in \mathcal F$,  its star $S(C)$  consists of cones in $\mathcal F$ having $C$ as a face. The function $s_c$ is clearly supported on  $S(c)$ (for brevity we write $S(c)$ instead than $S(C(c))$). From this it follows that  for any    monomial $m=x_{c_{i_1}}^{h_1}\cdots x_{c_{i_s}}^{h_{i_s}}$ the support of $\rho_{\mathcal F}(m)=s_{c_{i_1}}^{h_1}\cdots s_{c_{i_s}}^{h_{i_s}}$ is $S(C)$ if $C=C(c_{i_1}, \ldots ,c_{i_s})\in \mathcal F$ while $\rho_{\mathcal F}(m)=0$ otherwise. We deduce that $I_{\mathcal F}\subset \ker (\rho_{\mathcal F})$. In particular we obtain a homomorphism $\mu_{\mathcal F}:A_{\mathcal F}\to C(V).$ 
 
  Thus for  a monomial $m=x_{c_{i_1}}^{h_1}\cdots x_{c_{i_s}}^{h_{i_s}}$ with $C=C(c_{i_1}, \ldots ,c_{i_s})\in \mathcal F$, if $C'=C(c_{j_1}, \ldots ,c_{j_n})$ is a cone of maximal dimension,   $\mu_{\mathcal F}(mx_{c_{j_1}}\cdots x_{c_{j_n}})$  
   is supported on $C'$ if $C$ is a face of $C'$, while $\mu_{\mathcal F}(m x_{c_{j_1}}\cdots x_{c_{j_n}})=0$ otherwise.
 
Take now a  polynomial  $P(x_{c_{j_1}},\cdots ,x_{c_{j_n}})$. The restriction of $\mu_{\mathcal F}(P(x_{c_{j_1}},\cdots ,x_{c_{j_n}}))$ to  $C'$ is just the evaluation of $P(x_{c_{j_1}},\cdots ,x_{c_{j_n}})$  hence it is zero if and only if $P(x_{c_{j_1}},\cdots ,x_{c_{j_n}})\equiv 0$.
 
 Take $a\in \ker \mu_{\mathcal F}.$ If $a\neq 0$ there is  an $n-$dimensional cone $\overline C=C(c_{i_1}, \ldots ,c_{i_n})$ such that 
 $b=ax_{c_{i_1}}\cdots x_{c_{i_n}}\neq 0$.  Then   $\mu_{\mathcal F}(b)$ is the restriction of a polynomial   $P(x_{c_{i_1}},\cdots ,x_{c_{i_n}})$ to $\overline C$. Hence it is zero if and only if    $b= 0$. A contradiction.

The last statement follows since, if we denote by $y_d$ the variable corresponding to a vertex $d$ of $\mathcal G$, for any vertex $c$ of $\mathcal F$
\begin{equation}\label{lamappa}\gamma_{\mathcal G}^{\mathcal F}(x_c)=\sum_{d \ \text{vertex of } \ \mathcal G}(\rho_{\mathcal F}(x_c)(d))y_d.\end{equation}
\textcolor{red}{}
as the reader can easily verify. \end{proof}

We now take a suitable algebra of continuous functions on $V$ 
\begin{definition} \begin{enumerate}\item The space $\Sigma$ consists of the functions $f\in C(V)$ such that 
\begin{enumerate}\item If  $\lambda \in X_*(T)$, $f(\lambda)\in \mathbb Q$.\item
There exists a rational smooth projective fan $\mathcal F$ such that for any $C\in \mathcal F$ the restriction of $f$ to $C$ is linear.
\end{enumerate}
\item 
The equivariant ring of conditions $\C_T(T)$ is the $\mathbb Q$ subalgebra of the ring of continuous functions on $V$ generated by $\Sigma$.  \end{enumerate}
\end{definition}
In this paper we will always consider rational fans, so from now on the adjective `rational' will be omitted.
Since any two smooth projective fans admit a common refinement which is still  smooth and projective, it is clear that $\Sigma$ is a $\mathbb Q$-vector space. 

Let us order the set of $\mathfrak F$ of  smooth projective fans using refinement. We get a directed system $(A_{\mathcal F},  \gamma_{\mathcal G}^{\mathcal F})$. 
By Proposition \ref{conti} we deduce that $\C_T(T)$ is the union of the images of the homomorphisms  $\mu_{\mathcal F}$  and we  deduce (see \cite{Br}): 
\begin{proposition} $$\C_T(T)=\lim_{\mathcal F}A_{\mathcal F}.$$
\end{proposition}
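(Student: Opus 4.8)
The plan is to realize the direct limit concretely as a subalgebra of $C(V)$ and then identify that subalgebra with $\C_T(T)$.

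By part (3) of Proposition~\ref{conti} the inclusions $\mu_{\mathcal F}\colon A_{\mathcal F}\to C(V)$ are compatible with the transition maps $\gamma_{\mathcal G}^{\mathcal F}$, so they assemble into a single homomorphism $\mu\colon \lim_{\mathcal F}A_{\mathcal F}\to C(V)$. Since the poset $\mathfrak F$ is directed---any two smooth projective fans admit a common smooth projective refinement---every element of $\lim_{\mathcal F}A_{\mathcal F}$ is represented by some $a\in A_{\mathcal F}$ for a single $\mathcal F$, and if $\mu$ kills it then $\mu_{\mathcal F}(a)=0$, whence $a=0$ by part (2) of Proposition~\ref{conti}. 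Thus $\mu$ is injective and $\lim_{\mathcal F}A_{\mathcal F}$ is identified with the union $\bigcup_{\mathcal F}\mu_{\mathcal F}(A_{\mathcal F})\subseteq C(V)$, which is a $\Q$-subalgebra because the union is directed.

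It remains to show $\bigcup_{\mathcal F}\mu_{\mathcal F}(A_{\mathcal F})=\C_T(T)$. As $A_{\mathcal F}$ is generated as a $\Q$-algebra by the classes of the variables $x_c$, whose images under $\mu_{\mathcal F}$ are the functions $s_c$ spanning $S_{\mathcal F}$, the subalgebra $\mu_{\mathcal F}(A_{\mathcal F})$ is generated by $S_{\mathcal F}$; hence $\bigcup_{\mathcal F}\mu_{\mathcal F}(A_{\mathcal F})$ is the $\Q$-subalgebra generated by $\bigcup_{\mathcal F}S_{\mathcal F}$, while $\C_T(T)$ is by definition the one generated by $\Sigma$. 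So it suffices to prove $\bigcup_{\mathcal F}S_{\mathcal F}=\Sigma$. For the inclusion $\subseteq$: each $s_c$ is linear on every cone of $\mathcal F$ and takes nonnegative integer values on $X_*(T)$---if $\lambda\in X_*(T)$ lies in the relative interior of $C(c_{i_1},\dots,c_{i_k})$ then, the $c_{i_r}$ being a basis of a split direct summand, $\lambda$ is a nonnegative \emph{integer} combination of them---so $s_c\in\Sigma$ and therefore $S_{\mathcal F}\subseteq\Sigma$ (a $\Q$-vector space). For $\supseteq$: given $f\in\Sigma$, choose a smooth projective fan $\mathcal F$ on each cone of which $f$ is linear; then, exactly as in the proof of part (1) of Proposition~\ref{conti}, $f=\sum_c f(c)s_c$, and since every ray generator $c$ lies in $X_*(T)$ the coefficients $f(c)$ are rational, so $f\in S_{\mathcal F}$.

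The argument is essentially bookkeeping; the only points requiring a little care are that the direct limit may legitimately be realized as the directed union of the images $\mu_{\mathcal F}(A_{\mathcal F})$ (which uses directedness of $\mathfrak F$ together with the injectivity of each $\mu_{\mathcal F}$, both already available) and the rationality of values, i.e.\ that $\bigcup_{\mathcal F}S_{\mathcal F}$ is \emph{exactly} $\Sigma$ and not some larger space of piecewise-linear functions with arbitrary real values. Beyond this there is no serious obstacle; alternatively one may simply invoke~\cite{Br}.
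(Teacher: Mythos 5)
Your proof is correct and follows the same route the paper sketches: the compatibility from part (3) of Proposition \ref{conti} assembles the $\mu_{\mathcal F}$ into a single map, injectivity from part (2) plus directedness of $\mathfrak F$ identifies $\lim A_{\mathcal F}$ with the directed union of images, and then one observes this union is the subalgebra generated by $\bigcup S_{\mathcal F}=\Sigma$, which is $\C_T(T)$ by definition. The paper compresses all of this into one sentence and a citation to \cite{Br}; your write-up simply supplies the bookkeeping, including the small but worthwhile check that $\bigcup_{\mathcal F}S_{\mathcal F}$ is exactly $\Sigma$ (both the nonnegative-integer values of $s_c$ on $X_*(T)$, using that ray generators span a split direct summand, and the rationality of the coefficients $f(c)$ when $f\in\Sigma$ is expanded in the $s_c$).
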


Each element $\ell\in X^*(T)$ is a linear  function on $V$ taking integral values on $X_*(T)$ which, in terms of classes of the elements $x_{c_i}$, is  the class of
$$\sum_{i=1}^N\langle \ell, c_i\rangle x_{c_i}.$$
(we are taking into account the identification of $S_\mathcal F$ with $\rho_{\mathcal F} (S_\mathcal F)$).
In this way if we take a basis $\xi_1,\ldots ,\xi_n$ of $X^*(T)$ and set $R:=\mathbb Q[\xi_1,\ldots ,\xi_n]$,  $\mathcal A_{\mathcal F}$ is a free $R$ module (see \cite{Br} Corollary 1.3 and Proposition 2.2.) and we may consider the quotient algebra $$B_{\mathcal F}:=A_{\mathcal F}/(\xi_1,\ldots ,\xi_n)\simeq H^*(X_\mathcal F,\mathbb Q).$$ It is clear that the $\gamma_{\mathcal G}^{\mathcal F}$ induces an algebra homomorphism $\overline{\gamma}_{\mathcal G}^{\mathcal F}:B_{\mathcal F}\to B_{\mathcal G}$.
 
\begin{definition}
The ring of conditions for $T$ is the algebra
$$\mathcal C(T)=\lim_{\mathcal F}B_{\mathcal F}=\C_T(T)/(\xi_1,\ldots ,\xi_n).$$
\end{definition}

\section{Two differential graded algebras}
\label{sec:differentialgraded}

We now  want to define some differential graded algebras (DGA). Again we take a  projective smooth fan $\mathcal F$  in $V$.

We start with the  algebra  $\mathbb Q[x_c]\otimes \bigwedge (\tau_c)$, $c\in \Gamma_{\mathcal F}$.   We define a bigrading on  this algebra by setting  $\deg x_c=(2,0)$, $\deg\tau_c=(0,1)$. In general all the differential graded algebras we are going to consider will be easily seen to be bigraded, so we will often omit to specify how their bigrading is defined.

\begin{definition}  \begin{enumerate}\item The  algebra $D_{\mathcal F}$ is the quotient of the algebra $\mathbb Q[x_c]\otimes \bigwedge (\tau_c)$ modulo the bigraded ideal $\mathcal J_{\mathcal F}$ generated by ``the square free monomials"
$$x_{c_{i_1}}\cdots x_{c_{i_h}}\tau_{c_{j_1}}\cdots \tau_{c_{j_k}}$$
for each sequence of vertices $c_{i_1},\ldots ,c_{i_h},c_{j_1},\ldots ,c_{j_k}$ not spanning a cone in $\mathcal F$.
\item The differential $d$ is the unique derivation on $D_{\mathcal F}$ defined by
$$d(x_c)=0, \ \ \ d(\tau_c)=x_c.$$\end{enumerate}\end{definition}
Remark that $d$ preserves the relations in $D_{\mathcal F}$  and hence it is well defined and of degree 1.

It is easily seen that if for any cone $C=C(c_{i_1}, \dots ,c_{i_k})\in \mathcal F$, we take  the algebra 
$$A_{C,\mathcal F}:=\mathbb Q[x_{c_1},\ldots x_{c_N}]/[I_{\mathcal F}:(x_{c_{i_1}}\cdots x_{c_{i_k}})],$$
we have 
$$D_{\mathcal F}=\oplus_{C\in\mathcal F}A_{C,\mathcal F}\tau_{c_{i_1}}\cdots \tau_{c_{i_k}}.$$
and setting for each $m=0,\ldots ,n$ 
$$D_{m,\mathcal F}=\oplus_{C=C(c_{i_1},\ldots ,{c_{i_m})}}A_{C,\mathcal F}\tau_{c_{i_1}}\cdots \tau_{c_{i_m}},$$
the decomposition 
$$D_{\mathcal F}=\oplus_{m=0}^nD_{m,\mathcal F}.$$

\begin{proposition} \label{aciclico}
$$H^i(D_{\mathcal F},d)=\begin{cases} \mathbb Q \ \text{if} \  i=0\\ \ 0\ \text{if}\ i>0. \end{cases}$$\end{proposition}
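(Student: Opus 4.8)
The plan is to exploit the direct-sum decomposition $D_{\mathcal F}=\bigoplus_{m=0}^{n}D_{m,\mathcal F}$, but rather than trying to compute with it directly I would organize the computation as an explicit homotopy argument. The differential $d$ acts within each $A_{C,\mathcal F}\tau_{c_{i_1}}\cdots\tau_{c_{i_k}}$ component by the Leibniz rule, sending $\tau_{c}\mapsto x_c$, so $D_{\mathcal F}$ is built up from the Koszul-type complexes $\mathbb Q[x_c]\otimes\bigwedge(\tau_c)$ on each ray, glued according to the combinatorics of $\mathcal F$. More precisely, I would first observe that $D_{\mathcal F}$ is the quotient of $\bigotimes_{c\in\Gamma_{\mathcal F}}\bigl(\mathbb Q[x_c]\otimes\bigwedge(\tau_c)\bigr)$ by the square-free monomial ideal $\mathcal J_{\mathcal F}$, and that $\mathcal J_{\mathcal F}$ is a sub-\emph{complex} (it is $d$-stable, as already remarked). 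So there is a short exact sequence of complexes $0\to\mathcal J_{\mathcal F}\to \mathbb Q[x_c]\otimes\bigwedge(\tau_c)\to D_{\mathcal F}\to 0$, and I would compute the cohomology of the two outer terms and use the long exact sequence.

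For the middle term: each single-variable factor $\mathbb Q[x_c]\otimes\bigwedge(\tau_c)$ with $d\tau_c=x_c$ is the (augmented) Koszul complex on the regular element $x_c$, which has cohomology $\mathbb Q$ in degree $0$ and $0$ above; by the Künneth theorem (over $\mathbb Q$ everything is flat) the full tensor product $\bigotimes_c(\mathbb Q[x_c]\otimes\bigwedge(\tau_c))$ therefore has $H^0=\mathbb Q$ and $H^{>0}=0$. For the ideal $\mathcal J_{\mathcal F}$: here I would want to stratify $\mathcal J_{\mathcal F}$ by the non-faces of $\mathcal F$. The key structural fact, which I would extract from the description $D_{m,\mathcal F}=\bigoplus_{C\in\mathcal F(m)}A_{C,\mathcal F}\tau_{c_{i_1}}\cdots\tau_{c_{i_m}}$ together with the analogous decomposition for the ambient ring, is that the quotient map restricted to the ``face part'' is a split surjection of complexes with kernel $\mathcal J_{\mathcal F}$; and that $\mathcal J_{\mathcal F}$ itself decomposes as a direct sum, indexed by minimal non-faces, of sub-complexes each of which is acyclic. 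Concretely: if $\{c_{j_1},\dots,c_{j_r}\}$ is a minimal non-face, the associated piece is a free module over a polynomial/exterior ring on the remaining compatible variables tensored with the truncation of $\bigwedge(\tau_{c_{j_1}},\dots,\tau_{c_{j_r}})$ in top-ish degrees, which is Koszul-acyclic. An alternative and perhaps cleaner route is to build an explicit contracting homotopy $h$ on $D_{\mathcal F}$ directly: choose a total order on $\Gamma_{\mathcal F}$, and for a monomial $x^{\mathbf a}\tau_S$ (with $S$ the set of exterior variables present) let $h$ either remove the smallest $x_c$ dividing $x^{\mathbf a}$ and insert $\tau_c$, or act as zero, in the standard way that exhibits acyclicity of a Koszul complex; one checks $dh+hd=\mathrm{id}$ in positive degrees and that $h$ preserves $\mathcal J_{\mathcal F}$ (this last point being exactly where square-freeness of the ideal is used — removing an $x_c$ and adding $\tau_c$ does not enlarge the support set of vertices, so a non-face stays a non-face).

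The main obstacle is the verification that the contracting homotopy (or the summand-wise acyclicity) is genuinely compatible with the ideal $\mathcal J_{\mathcal F}$: one must be careful that the homotopy operator does not move a class represented by a non-face monomial to one represented by a face monomial, and conversely that it is well defined on the quotient. This is a bookkeeping point about supports of monomials, and square-freeness of the generators of $\mathcal J_{\mathcal F}$ is what makes it work: the support $\{c : x_c \text{ or }\tau_c \text{ divides the monomial}\}$ can only shrink or stay the same under $h$, hence ``not spanning a cone'' is preserved. Once that compatibility is in hand, $H^{>0}(D_{\mathcal F})=0$ is immediate and $H^0=\mathbb Q$ follows from the augmentation $x_c\mapsto 0$, $\tau_c\mapsto 0$ (equivalently from $D_{0,\mathcal F}=A_{\mathcal F}$ having $H^0=\mathbb Q$ and the fact that no element of $D_{1,\mathcal F}$ maps into constants). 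I would present the homotopy version as the main proof and relegate the long-exact-sequence version to a remark, since the homotopy makes the bigrading and the degree-$1$ statement transparent.
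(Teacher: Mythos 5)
Your preferred route (the explicit contracting homotopy) is essentially the paper's proof: the paper fixes a total order on the rays, defines a degree $-1$ operator $S$ on monomials, and verifies $dS+Sd=\mathrm{id}$ on $D^{+}_{\mathcal F}$, exactly as you propose, and the reason it descends to the quotient is precisely the support-preservation fact you isolate.

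One caution about how you phrase the homotopy. You write ``let $h$ either remove the smallest $x_c$ dividing $x^{\mathbf a}$ and insert $\tau_c$, or act as zero.'' Taken literally — looking only at the polynomial part — this is not a valid homotopy: on $x_3\tau_1$ it would produce $\tau_3\tau_1=-\tau_1\tau_3$, and then $(dh+hd)(x_3\tau_1)=2x_3\tau_1-x_1\tau_3\neq x_3\tau_1$. The correct rule, which is what the paper uses and what the Künneth/tensor-product homotopy formula actually gives, is to test the smallest index $f$ among \emph{all} rays appearing in the monomial (whether as an $x$ or a $\tau$): if $\tau_f$ is present, set $h=0$; if only $x_f$ is present, remove one factor of $x_f$ and prepend $\tau_f$. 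Because $f$ is smaller than every $\tau$-index already present, prepending $\tau_f$ keeps the exterior part in increasing order and no signs appear; this is why the paper's verification of $dS+Sd=\mathrm{id}$ is sign-free. With that correction, your observation that $h$ leaves the support set $\{c : x_c \text{ or } \tau_c \text{ occurs}\}$ unchanged is exactly right and is what guarantees $h$ is well defined on $D_{\mathcal F}$ (equivalently, that it preserves $\mathcal J_{\mathcal F}$).

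Finally, the long-exact-sequence route you sketch first has a genuine gap: the ideal $\mathcal J_{\mathcal F}$ does \emph{not} decompose as a direct sum indexed by minimal non-faces, since a monomial's support can contain several minimal non-faces simultaneously; one would instead need a filtration (say by the number or size of non-face supports) and a spectral-sequence or inclusion--exclusion argument. You were right to demote that to a remark and lead with the homotopy.
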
\begin{proof}
Let us consider the complex $(D^+_{\mathcal F},d)$ of elements of positive degree. We need to prove that this complex is exact. Let us define define a  map of degree $-1$
$$S:D^+_{\mathcal F}\to D^+_{\mathcal F}$$
and show that $Sd+dS$ is the identity. This will give our claim. 

For this let fix a total order  $c_1,\ldots ,c_N$ of the vertices of $\mathcal F$. \\

Below, for brevity, we will write $x_{i_s}$, $\tau _{i_s}$ instead than $x_{c_{i_s}}$ , $\tau_{c_{i_s}}$. \\

Let $m=x_{i_1}^{h_1}\cdots x_{i_s}^{h_s}\tau_{j_1}\cdots \tau_{j_r}\in D^+_{\mathcal F}$, with $j_1<j_2\cdots <j_r$ and if $s>0$, $h_i>0$.  If $s=0$ (resp.$r=0$) we set $i_1=\infty$  (resp.$j_1=\infty$).  Notice that $r+s>0$ and set $f=\min (i_1,j_1)$. We define

$$S(m)=\begin{cases}0 \ \ \text{if}\ f=j_1\\  x_{i_1}^{h_1-1}\cdots x_{N}^{h_N}\tau_{i_1}\tau_{j_1}\cdots \tau_{j_r} \ \ \text{if}\ f=i_1<j_1\end{cases}$$
 Now let us compute 
 $(dS+Sd)(m)$.
 If $f=j_1$ we have $dS(m)=0$ and 
 $$Sd(m)=S(x_{j_1}x_{i_1}^{h_1}\cdots x_{i_s}^{h_s}\tau_{j_2}\cdots \tau_{j_r} +\sum_{\ell=2}^r (-1)^{\ell+1}x_{j_\ell}x_{i_1}^{h_1}\cdots x_{i_s}^{h_s}\tau_{j_1}\cdots \check{\tau}_{j_\ell}\cdots \tau_{j_r})=m.$$
 
If  $f=i_1<j_1$, one easily sees that 
 $$dS(m)=m- \sum_{\ell=1}^r (-1)^{\ell+1}x_{j_\ell}x_{i_1}^{h_1-1}x_{i_2}^{h_2}\cdots x_{i_s}^{h_s}\tau_{i_1}\tau_{j_1}\cdots \check{\tau}_{j_\ell}\cdots \tau_{j_r}=m-Sd(m)$$
 and everything  follows.
\end{proof}

%


When the fan $\mathcal G$ is a (smooth, projective) refinement of $\mathcal F$, we want to compare the algebras $D_{\mathcal F}$ and   $D_{\mathcal G}$.

As before in order to avoid confusion for $d\in \Gamma_{\mathcal G}$ we denote by $y_d$ and $\upsilon_d$ the corresponding even and odd variables.
We   define 
a homomorphism  
$$\xi_{\mathcal G}^{\mathcal F}:\mathbb Q[x_c]\otimes \bigwedge (\tau_c)\to\mathbb Q[y_d]\otimes \bigwedge (\upsilon_d)$$
by setting 
\begin{equation}\label{lamappa1}\xi_{\mathcal G}^{\mathcal F}(x_c)=\sum_{d \ \text{vertex of } \ \mathcal G}(\rho_{\mathcal F}(x_c)(d))y_d.\end{equation}
\begin{equation}\label{lamappa2}\xi_{\mathcal G}^{\mathcal F}(\tau_c)=\sum_{d \ \text{vertex of } \ \mathcal G}(\rho_{\mathcal F}(x_c)(d))\upsilon_d.\end{equation}
We then set
 $$  \overline \xi_{\mathcal G}^{\mathcal F}=q\circ \xi_{\mathcal G}^{\mathcal F}:\mathbb Q[x_c]\otimes \bigwedge (\tau_c)\to   D_{\mathcal G},$$
 $q$ being the quotient modulo ${\mathcal J}_{\mathcal G}$.
We then have
\begin{proposition}  
$\overline \xi_{\mathcal G}^{\mathcal F}(\mathcal J_{\mathcal F})=0$. It follows that $\overline \xi_{\mathcal G}^{\mathcal F}$ factors through a   homomorphism  of differential graded algebras
$$  \zeta_{\mathcal G}^{\mathcal F}: D_{\mathcal F}\to  D_{\mathcal G}.$$
\end{proposition}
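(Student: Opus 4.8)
\emph{Proof sketch.} The plan is to show that $\overline\xi_{\mathcal G}^{\mathcal F}$ kills each of the monomial generators of $\mathcal J_{\mathcal F}$; since $\overline\xi_{\mathcal G}^{\mathcal F}$ is an algebra homomorphism and $\mathcal J_{\mathcal F}$ is the ideal generated by these monomials, this at once gives $\overline\xi_{\mathcal G}^{\mathcal F}(\mathcal J_{\mathcal F})=0$ and hence the factorization $\zeta_{\mathcal G}^{\mathcal F}:D_{\mathcal F}\to D_{\mathcal G}$. That $\zeta_{\mathcal G}^{\mathcal F}$ is a morphism of differential graded algebras is then automatic: from \eqref{lamappa1} and \eqref{lamappa2} one checks on generators that $\xi_{\mathcal G}^{\mathcal F}$ commutes with $d$ --- indeed $d\xi_{\mathcal G}^{\mathcal F}(x_c)=0=\xi_{\mathcal G}^{\mathcal F}(dx_c)$ and $d\xi_{\mathcal G}^{\mathcal F}(\tau_c)=\sum_{d}(\rho_{\mathcal F}(x_c)(d))\,y_d=\xi_{\mathcal G}^{\mathcal F}(x_c)=\xi_{\mathcal G}^{\mathcal F}(d\tau_c)$ --- so the same holds on the whole algebra, and $q$ is a differential quotient map.

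So fix a generator $m=x_{c_{i_1}}\cdots x_{c_{i_h}}\tau_{c_{j_1}}\cdots\tau_{c_{j_k}}$ of $\mathcal J_{\mathcal F}$, i.e.\ the set $S=\{c_{i_1},\dots,c_{i_h},c_{j_1},\dots,c_{j_k}\}$ of vertices does not span a cone of $\mathcal F$. Expanding $\xi_{\mathcal G}^{\mathcal F}(m)$ by \eqref{lamappa1}--\eqref{lamappa2}, it is a $\mathbb Q$-linear combination, over all choices of vertices $d_1,\dots,d_h,e_1,\dots,e_k$ of $\mathcal G$, of the monomials $y_{d_1}\cdots y_{d_h}\upsilon_{e_1}\cdots\upsilon_{e_k}$ with coefficient $\prod_a s_{c_{i_a}}(d_a)\cdot\prod_b s_{c_{j_b}}(e_b)$ (recall $\rho_{\mathcal F}(x_c)=s_c$). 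After applying the quotient $q$ onto $D_{\mathcal G}$, such a term survives only if the set $\{d_1,\dots,d_h,e_1,\dots,e_k\}$ spans a cone $D\in\mathcal G$, and it is nonzero only if in addition $s_{c_{i_a}}(d_a)\neq0$ for every $a$ and $s_{c_{j_b}}(e_b)\neq0$ for every $b$.

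Now I would use the support property of the functions $s_c$ already exploited in the proof of Proposition~\ref{conti}: if a vertex $v$ of $\mathcal G$ lies in the relative interior of the cone $C(v)\in\mathcal F$, then $s_c(v)\neq0$ forces $c$ to be one of the rays of $C(v)$. Since $\mathcal G$ refines $\mathcal F$, the cone $D$ above is contained in a unique smallest cone $C_D\in\mathcal F$; every vertex $v$ of $D$ lies in $C_D$, and because $\mathcal F$ is smooth, hence simplicial, $v$ lies in the relative interior of a face of $C_D$, so $C(v)$ is a face of $C_D$ and all its rays are rays of $C_D$. Applying this to $v=d_a$ and $v=e_b$, the non-vanishing of the coefficient forces every element of $S$ to be a ray of $C_D$. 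But, again by smoothness of $\mathcal F$, every subset of the set of rays of $C_D$ spans a face of $C_D$, which is a cone of $\mathcal F$; thus $S$ would span a cone of $\mathcal F$, contradicting the choice of $m$. Hence every term in the expansion of $\xi_{\mathcal G}^{\mathcal F}(m)$ dies under $q$, i.e.\ $\overline\xi_{\mathcal G}^{\mathcal F}(m)=0$, as desired.

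The only genuine point in the argument is the geometric bookkeeping of the previous paragraph --- locating each $\mathcal G$-vertex inside a face of the minimal $\mathcal F$-cone containing the relevant $\mathcal G$-cone, and using simpliciality of $\mathcal F$ twice (to decompose a cone into the relative interiors of its faces, and to conclude that an arbitrary subset of the rays of a cone again spans a cone of $\mathcal F$). Everything else --- that it suffices to test on generators and that the differential is respected --- is formal, and parallels the even-degree statement \eqref{lamappa} in Proposition~\ref{conti}.
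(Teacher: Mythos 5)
Your proof is correct, and it takes a genuinely different route from the paper's. The paper's argument is more formal: it first disposes of the purely even generators $x_{c_{i_1}}\cdots x_{c_{i_h}}$ by citing Proposition~\ref{conti}, i.e.\ the already-established fact that $\gamma_{\mathcal G}^{\mathcal F}$ is well defined on $A_{\mathcal F}$ together with the identity $\mu_{\mathcal F}=\mu_{\mathcal G}\gamma_{\mathcal G}^{\mathcal F}$; it then observes that the expansion of $\xi_{\mathcal G}^{\mathcal F}(x_{c_{i_1}}\cdots x_{c_{i_h}})$ has only non-negative coefficients, so vanishing in $A_{\mathcal G}$ forces each individual monomial in the $y_d$'s (with nonzero coefficient) to lie in the monomial ideal $I_{\mathcal G}$; and finally it notes that replacing any of the $y_d$'s in such a monomial by $\upsilon_d$'s produces a generator of $\mathcal J_{\mathcal G}$, which settles the mixed and odd cases. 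By contrast, you bypass the positivity trick and the appeal to Proposition~\ref{conti}, and instead re-derive the key combinatorial fact directly and uniformly in all the $x$'s and $\tau$'s at once: each $\mathcal G$-vertex of a cone $D\in\mathcal G$ lies in the relative interior of a face of the minimal $\mathcal F$-cone $C_D\supset D$, so the support constraint on the $s_c$'s forces every $c$ with nonzero contribution to be a ray of $C_D$, and simpliciality of $\mathcal F$ then lets any subset of those rays span a cone, contradiction. Your argument is more self-contained and conceptually transparent about \emph{why} each term dies, at the cost of re-doing some of the geometry that the paper has already packaged into Proposition~\ref{conti}; the paper's version is shorter precisely because it reuses that machinery, but it leans on the non-negativity of the coefficients in \eqref{lamappa1}--\eqref{lamappa2}, which your argument does not need at all.
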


\begin{proof} 

Let us take a monomial $x_{c_{i_1}}\cdots x_{c_{i_h}}\in \mathcal J_{\mathcal F}$ that is $c_{i_1},\ldots ,c_{i_h}$ do not span a cone in $\mathcal F$. We know by Proposition \ref{conti} that 
$\overline \xi_{\mathcal G}^{\mathcal F}(x_{c_{i_1}}\cdots x_{c_{i_h}})=0$.

Now notice that $\xi_{\mathcal G}^{\mathcal F}(x_c)$ is a linear combination with non negative coefficients of the $y_d$.
We deduce that
$ \xi_{\mathcal G}^{\mathcal F}(x_{c_{i_1}}\cdots x_{c_{i_h}})$
is a linear combination with non negative coefficients of monomials in the $y_d$. Thus each monomial appearing with non zero coefficient has to  lie in  $ \mathcal J_{\mathcal G}.$

Necessarily if in any such monomial we substitute some of the $y_d$'s with the corresponding $\upsilon_d$'s we also get a relation in $ D_{\mathcal G}.$

This immediately implies that  for any $h>0$, $\xi_{\mathcal G}^{\mathcal F}(\tau_{c_{i_{1}}}\cdots \tau_{c_{i_h}})\in \mathcal J_{\mathcal G}$ and for any $1\leq \ell\leq h$,
$ \xi_{\mathcal G}^{\mathcal F}(x_{c_{i_1}}\cdots x_{c_{i_\ell}}\tau_{c_{i_{\ell+1}}}\cdots \tau_{c_{i_h}})\in \mathcal J_{\mathcal G}$.

The fact that $d\circ \zeta_{\mathcal G}^{\mathcal F}=\zeta_{\mathcal G}^{\mathcal F}\circ d$  then follows  from the definitions.
\end{proof}

Notice now that it is clear that $  \zeta_{\mathcal G}^{\mathcal F}$ is a quasi isomorphism so that setting
$$( D,d)=\lim_{\mathcal F}( D_\mathcal F,d)$$
we deduce that 
$$H^i( D,d)=\begin{cases} \mathbb Q \ \text{if} \  i=0\\ \ 0\ \text{if}\ i>0. \end{cases}$$

Finally, let us remark that   $\zeta_{\mathcal G}^{\mathcal F}(\ D_{m,\mathcal F})\subset \ D_{m,\mathcal  G}$ for each $m=0,\ldots n$. It follows that, taking the limit, 
$D_m=\lim_{\mathcal F}( D_{m,\mathcal F})$ we get a direct sum decomposition
$$ D=\bigoplus_m D_m.$$
In particular for $m=0$, $D_0=B_T(T)$. We denote by $\nu_{\mathcal F}:D_\mathcal F\to D$ the natural morphism.

We want to give a more explicit description of the algebra $\ D$. In order to do so let us take the exterior algebra $\bigwedge(\Sigma)$. Define the algebra $\E=B_T(T)\otimes \bigwedge(\Sigma)/H$, where $H$ is the ideal generated by the elements $s_1\cdots s_t\otimes \sigma_1\cdots \sigma_r$, with $s_i,\sigma_j\in \Sigma$,  such that $s_1\cdots s_t \sigma_1\cdots \sigma_r=0$ in $B_T(T)$.
Notice that the natural differential $d$ on $B_T(T)\otimes \bigwedge(\Sigma)$ defined by setting $d(a\otimes s)=as\otimes 1\in B_T(T)$ and extended  as an algebra derivation, clearly preserves $H$. It follow that   we get a differential on  $\E.$ We claim,
\begin{proposition} $\E\simeq D$ as differential graded algebras.
\end{proposition}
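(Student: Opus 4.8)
The plan is to construct the isomorphism $\E \simeq \D$ directly at the level of the generating data and then check it respects the relations, the multiplication, and the differential. Recall that $\D = \varinjlim_{\mathcal F} \D_{\mathcal F}$ and that $\D_0 = B_T(T) = \varinjlim_{\mathcal F} A_{\mathcal F}$ sits inside $\D$ as a subalgebra; moreover each $\D_{\mathcal F}$ is generated by the even classes $x_c$ and the odd classes $\tau_c$ for $c \in \Gamma_{\mathcal F}$. Under $\mu_{\mathcal F}$ the class $x_c$ is identified with the piecewise-linear function $s_c \in \Sigma$, so the even part of $\D$ is exactly generated by $\Sigma \subset B_T(T)$. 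The point is that the odd generator $\tau_c$ should be thought of as ``the same function $s_c$, placed in odd degree'': this is precisely the tensor factor $\bigwedge(\Sigma)$ in $\E$. So first I would define a map $B_T(T)\otimes \bigwedge(\Sigma) \to \D$ by sending $a\otimes s_{c_1}\cdots s_{c_r}$ (with the $s_{c_i}$ among the distinguished generators $s_c$ of some $A_{\mathcal F}$) to $\nu_{\mathcal F}(a)\,\tau_{c_1}\cdots\tau_{c_r}$, after first fixing a common fan $\mathcal F$ refining everything in sight; compatibility of this definition with refinement is exactly formulas \eqref{lamappa1}–\eqref{lamappa2}, since $\xi^{\mathcal F}_{\mathcal G}$ acts the same way on $x_c$ and on $\tau_c$.

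Next I would check this map kills the ideal $H$. An element $s_1\cdots s_t\otimes \sigma_1\cdots\sigma_r$ with $s_1\cdots s_t\sigma_1\cdots\sigma_r = 0$ in $B_T(T)$: choosing a fan $\mathcal F$ on which all the $s_i,\sigma_j$ are linear and are nonnegative combinations of the distinguished $s_c$'s, the vanishing of the product in $B_T(T)$ means (by Proposition \ref{conti}, part (2), i.e. injectivity of $\mu_{\mathcal F}$ and the description of $I_{\mathcal F}$) that every monomial $x_{c_{i_1}}\cdots$ appearing with nonzero coefficient in the product expansion of $s_1\cdots s_t\,\sigma_1\cdots\sigma_r = \prod x_{c}^{\bullet}$ lies in $I_{\mathcal F}$; since the coefficients are all nonnegative there is no cancellation, so each such monomial corresponds to a set of rays not spanning a cone. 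The very same argument used in the proof of the previous proposition (``if in any such monomial we substitute some of the $y_d$'s with the corresponding $\upsilon_d$'s we also get a relation'') shows that moving the $\sigma_j$-factors into the odd variables still lands in $\mathcal J_{\mathcal F}$. Hence the image of $H$ is zero and we get an algebra map $\Phi\colon \E \to \D$. It is a map of differential graded algebras because $d$ on both sides is the unique derivation with $d(x_c)=0$, $d(\tau_c)=x_c$, resp. $d(a\otimes 1)=0$ and $d(1\otimes s)=s\otimes 1$, and these match under $\Phi$ by construction.

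Finally I would produce the inverse. Over a fixed smooth projective $\mathcal F$, define $\Psi_{\mathcal F}\colon \D_{\mathcal F}\to \E$ by $x_c \mapsto s_c\otimes 1$ and $\tau_c \mapsto 1\otimes s_c$. This is well defined: a square-free generator $x_{c_{i_1}}\cdots x_{c_{i_h}}\tau_{c_{j_1}}\cdots\tau_{c_{j_k}}$ of $\mathcal J_{\mathcal F}$ comes from rays not spanning a cone, so the corresponding product $s_{c_{i_1}}\cdots s_{c_{i_h}} s_{c_{j_1}}\cdots s_{c_{j_k}} = 0$ in $B_T(T)$ (their product of supports is empty), hence $s_{c_{i_1}}\cdots s_{c_{i_h}}\otimes s_{c_{j_1}}\cdots s_{c_{j_k}} \in H$. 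Compatibility of the $\Psi_{\mathcal F}$ with the maps $\zeta^{\mathcal F}_{\mathcal G}$ again follows from \eqref{lamappa1}–\eqref{lamappa2} together with $\mu_{\mathcal F}=\mu_{\mathcal G}\gamma^{\mathcal F}_{\mathcal G}$ of Proposition \ref{conti}(3), so the $\Psi_{\mathcal F}$ assemble to $\Psi\colon \D\to\E$. On generators $\Psi\circ\Phi$ and $\Phi\circ\Psi$ are visibly the identity, so $\Phi$ is an isomorphism of DGAs. The main obstacle, and the only place requiring care, is the verification that $\Phi$ annihilates $H$ (equivalently, that the two sides have ``the same relations''): one must be sure that the defining relations of $\E$ — phrased in terms of products of arbitrary elements of $\Sigma$ vanishing in $B_T(T)$ — are no stronger and no weaker than the square-free monomial relations $\mathcal J_{\mathcal F}$ in the limit, and this hinges on the nonnegativity of the coefficients in $\xi^{\mathcal F}_{\mathcal G}(x_c)$ (no cancellation of monomials) exactly as exploited in the proof that $\overline\xi^{\mathcal F}_{\mathcal G}(\mathcal J_{\mathcal F})=0$.
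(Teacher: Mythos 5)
Your overall plan --- define explicit maps in both directions over a fixed fan, verify that they respect the defining ideals, and pass to the limit --- matches the paper's proof (which calls your $\Psi$ by $\mu$ and constructs that direction first). Your well-definedness check for $\Psi$ and the compatibility arguments via~\eqref{lamappa1}--\eqref{lamappa2} are fine. The gap is in your verification that $\Phi$ kills $H$. You claim one can choose $\mathcal F$ so that all the $s_i,\sigma_j$ become \emph{nonnegative} linear combinations of the distinguished generators $s_c$, and then invoke ``no cancellation.'' This is not available: a function $f\in\Sigma$ which is piecewise linear on $\mathcal F$ equals $\sum_c f(c)\,s_c$, and the values $f(c)$ at the rays of $\mathcal F$ are unchanged under any refinement, so if some $f(c)<0$ no refinement helps; indeed any nonzero character $\ell\in X^*(T)\subset\Sigma$ takes negative values. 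The nonnegativity that is legitimately used in the proof that $\overline\xi_{\mathcal G}^{\mathcal F}(\mathcal J_{\mathcal F})=0$ concerns the coefficients $\rho_{\mathcal F}(x_c)(d)\ge 0$ of the distinguished $s_c$ themselves under a change of fan, not of arbitrary elements of $\Sigma$, so it does not transfer in the way your closing sentence asserts.

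Fortunately no positivity is needed. The paper's argument for this step is: writing the product as a polynomial $P(x_c)\in\mathbb Q[x_c]$, which is a product of nonzero linear forms and hence $P\neq 0$, the vanishing $\rho_{\mathcal F}(P)=0$ together with the injectivity of $\mu_{\mathcal F}$ (Proposition~\ref{conti}(2)) forces $P\in I_{\mathcal F}$; and since $I_{\mathcal F}$ is a \emph{monomial} ideal, $P\in I_{\mathcal F}$ already implies that every monomial of $P$ appearing with nonzero coefficient lies in $I_{\mathcal F}$, with no sign hypothesis whatsoever. From that point your step of replacing $x_c$-factors by $\tau_c$-factors to land in $\mathcal J_{\mathcal F}$ proceeds exactly as you describe. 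So the conclusion you want is reachable, but the nonnegativity lemma you lean on is false and should be replaced by the monomial-ideal observation.
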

\begin{proof} For our usual  smooth projective fan $\mathcal F$, we have already defined a map $\mu_{\mathcal F}:A_{\mathcal F}\to B_T(T)$,  with the property that  $\mu_{\mathcal F}(x_c)=s_c$ for any ray $c$ of $\mathcal F$, which gives an inclusion of $S_{\mathcal F}$ into  $ \Sigma$ and hence a map $\bigwedge(S_{\mathcal F})\to  \bigwedge(\Sigma)$. Tensoring, we obtain a map
$$A_{\mathcal F}\otimes \bigwedge(S_{\mathcal F})\to B_T(T)\otimes \bigwedge(\Sigma)$$ and composing with the quotient, a  map
$$A_{\mathcal F}\otimes \bigwedge(S_{\mathcal F})\to \E$$
By the very definition of $ D_{\mathcal F}$ we deduce that this map factors through a map $$\overline\nu_\mathcal F:D_{\mathcal F}\to \E$$
Passing to the limit and recalling that $\Sigma$ is spanned by the functions $s_c$ for some ray in a suitable fan, we get a  surjective  map
$$\mu: D\to \E$$
On the other hand, if we take an element $s_1\cdots s_t\otimes \sigma_1\cdots \sigma_r$, we can find a fan $\mathcal F$ with the required properties, such that  each $s_i=\mu_{\mathcal F}(x_i)$ and each $\sigma_j=\mu_{\mathcal F}(\tau_j)$ with  $x_i,\tau_j\in S_\mathcal F\setminus \{0\}$. Thus we can map $s_1\cdots s_t\otimes \sigma_1\cdots \sigma_r$ to $\nu_\mathcal F(x_1\cdots x_t\otimes \tau_1\cdots \tau_r)$. In order to see that this map is well defined we just have to show that if 
the function $s_1\cdots s_t \sigma_1\cdots \sigma_r =0$ in $C(V)$, the element $x_1\cdots x_t\ \tau_1\cdots \tau_r=0$ in $A_\mathcal F$. 

If we write each $x_i$ and each $\tau_j$ as a linear combination of the basis elements $x_c$, $c$ a ray of $\mathcal F$, of $S_{\mathcal F}$ we get that $x_1\cdots x_t\ \tau_1\cdots \tau_r$ is the image of a polynomial $P(x_c)\in \mathbb Q[x_c]$ which is a product of non zero linear functions and hence non zero. We deduce that if we write $P(x_c)$ as a linear combination of monomials and we compute it as a function on $V$, we get $0$ if and only if each monomial appearing with non zero coefficient in $P(x_c)$ is zero in $A_\mathcal F$ hence the element $x_1\cdots x_t\otimes \tau_1\cdots \tau_r=0$ in $A_\mathcal F$.

It follows that we get a map $\E\to D$ and it is immediate to check that this map is the inverse of $\mu$.
\end{proof}
\bigskip

For every cone $C\in \mathcal F$, $A_{C,\mathcal F}$ is a 
 $R$-module. Again by Corollary  1.3 and Proposition 2.2. in \cite{Br}, $A_{C,\mathcal F}$  is free of rank equal to the number of $n$ dimensional cones in the star $S(C)$ of $C$. Furthermore $A_{C,\mathcal F}$  is isomorphic to  the $T$-equivariant cohomology of the closure $X_{C,\mathcal F}$ of the $T$-orbit associated to the cone $C\in \mathcal F$ and the 
  quotient algebra $$B_{C,\mathcal F}:=A_{\mathcal F}/(\xi_1,\ldots ,\xi_n)\simeq H^*(X_\mathcal F,\mathbb Q).$$ From this we deduce in particular that $ D_{\mathcal F}$ is a free $R$ module and, setting by abuse of notation, $\xi_i:=\xi_i\otimes 1$, for each $i=1,\ldots ,n$,  we may consider the quotient algebra $$\mathcal C_{\mathcal F}:=\ D_{\mathcal F}/(\xi_1,\ldots ,\xi_n).$$
  
Since each element $\xi_j$ is a cocycle, we deduce that the ideal $(\xi_1,\ldots ,\xi_n)$ is preserved by the differential $d$ and we have an induced differential on $\mathcal C_{\mathcal F}$ which we shall denote by the same letter.

 Notice that if we set in  $\ D_{\mathcal F}$
$$\psi_j=\sum_{i=1}^N\langle \ell_j, c_i\rangle \tau_{c_i}.$$
 in  $\mathcal C_{\mathcal F}$ we get that $d(\psi_j)=0$ so that we obtain an inclusion of the exterior algebra $\bigwedge (\psi_1,\ldots, \psi_n)$ into the subalgebra $Z(\mathcal C_{\mathcal F}$) of cocycles and 
 a degree preserving homomorphism $$j_{\mathcal F}:H^*(T)\to H^*(\mathcal C_{\mathcal F}),$$
defined by setting $j_{\mathcal F}(\ell_j)=\psi_j$.

We have 
\begin{proposition}  The homomorphism $j_{\mathcal F}$ is an isomorphism.
\end{proposition}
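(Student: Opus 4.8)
The plan is to exhibit $\mathcal C_{\mathcal F}$ as the reduction $D_{\mathcal F}\otimes_R\mathbb Q$ of a free resolution of $\mathbb Q$ over $R=\mathbb Q[\xi_1,\dots,\xi_n]$ and to compare that resolution with the Koszul resolution of $\mathbb Q$. First I would record the two facts about $D_{\mathcal F}$ that make this work. Because $d(x_c)=0$ the differential is $\mathbb Q[x_c]$-linear, hence $R$-linear; and in the decomposition $D_{\mathcal F}=\bigoplus_{m=0}^n D_{m,\mathcal F}$ by $\tau$-count one has $d(D_{m,\mathcal F})\subseteq D_{m-1,\mathcal F}$, each summand $D_{m,\mathcal F}=\bigoplus_{\dim C=m}A_{C,\mathcal F}\tau_{c_{i_1}}\cdots\tau_{c_{i_m}}$ being a free $R$-module of finite rank by Corollary 1.3 and Proposition 2.2 of \cite{Br}. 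So, graded homologically by $\tau$-count, $(D_{\bullet,\mathcal F},d)$ is a bounded complex of finite free $R$-modules; and since $d$ respects the summands, $H_\bullet(D_{\mathcal F})=\bigoplus_m H_m(D_{\mathcal F})$, which by Proposition \ref{aciclico} equals $\mathbb Q$ concentrated in $\tau$-count $0$. Thus the projection $D_{\bullet,\mathcal F}\to\mathbb Q$ onto $H_0$ is a free resolution of the $R$-module $\mathbb Q=R/(\xi_1,\dots,\xi_n)$.

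Next, since $\xi_1,\dots,\xi_n$ is a regular sequence in the polynomial ring $R$, the Koszul complex $K_\bullet:=R\otimes_{\mathbb Q}\bigwedge(e_1,\dots,e_n)$, with $e_j$ in homological degree $1$ and $d$ the $R$-linear derivation sending $e_j\mapsto\xi_j$, is also a free resolution of $\mathbb Q$. I would then define the $R$-algebra map $\iota\colon K_\bullet\to D_{\mathcal F}$ by $\iota(e_j)=\psi_j=\sum_{i=1}^N\langle\ell_j,c_i\rangle\tau_{c_i}$: it is a chain map because $\iota(de_j)=\xi_j=d\psi_j$, it sends $K_m$ into $D_{m,\mathcal F}$ because each $\psi_j$ has $\tau$-count $1$, and it commutes with the augmentations onto $\mathbb Q$ (both fix $1$). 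Hence $\iota$ is a morphism of free resolutions of $\mathbb Q$ over the identity, so it is a homotopy equivalence of bounded complexes of projective $R$-modules. Applying the additive functor $-\otimes_R\mathbb Q$ preserves the homotopy equivalence; identifying $K_\bullet\otimes_R\mathbb Q=(\bigwedge(e_1,\dots,e_n),0)$ (the differential dies since $\xi_j\mapsto 0$) and $D_{\mathcal F}\otimes_R\mathbb Q=D_{\mathcal F}/(\xi_1,\dots,\xi_n)=\mathcal C_{\mathcal F}$ with its induced differential, we obtain a quasi-isomorphism $(\bigwedge(e_j),0)\to(\mathcal C_{\mathcal F},d)$ carrying $e_j$ to $\psi_j$. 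Therefore $H^*(\mathcal C_{\mathcal F})$ is the exterior algebra on the degree-$1$ classes $[\psi_1],\dots,[\psi_n]$; since $H^*(T)=\bigwedge(\ell_1,\dots,\ell_n)$ is the exterior algebra on $H^1(T)=X^*(T)\otimes\mathbb Q$ and $j_{\mathcal F}$ is exactly the algebra map $\ell_j\mapsto[\psi_j]$, it follows that $j_{\mathcal F}$ is an isomorphism.

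I expect the only real subtlety to be the grading bookkeeping: the homological-algebra argument must be run with the $\tau$-count (equivalently $\psi$-count on $K_\bullet$) grading rather than the cohomological grading, so that the terms of $D_{\bullet,\mathcal F}$, of $K_\bullet$, of $\iota$ and of the mapping cone are genuine finite free $R$-modules; and the vanishing $H_m(D_{\mathcal F})=0$ for $m>0$ in that grading has to be deduced from Proposition \ref{aciclico} via the fact that $d$ preserves the decomposition $\bigoplus_m D_{m,\mathcal F}$. The remaining ingredients — regularity of $\xi_1,\dots,\xi_n$ in $R$, that a chain map of projective resolutions lifting the identity is a homotopy equivalence, and that additive functors preserve homotopy equivalences — are standard.
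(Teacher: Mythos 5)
Your proof is correct, but it takes a genuinely different route from the paper's. The paper argues by direct induction on $h$: it sets $\mathcal C_{\mathcal F}^{(h)}=D_{\mathcal F}/(\xi_1,\ldots,\xi_h)$, observes that $\xi_h$ is a non--zero divisor in $\mathcal C_{\mathcal F}^{(h-1)}$ (since $D_{\mathcal F}$ is $R$-free), and uses the short exact sequence
\[
0\to \mathcal C_{\mathcal F}^{(h-1)}[-2]\xrightarrow{\,\cdot\xi_h\,}\mathcal C_{\mathcal F}^{(h-1)}\to\mathcal C_{\mathcal F}^{(h)}\to 0
\]
together with the fact that multiplication by the coboundary $\xi_h$ is zero on cohomology, to produce the exact sequence $0\to H^*(\mathcal C_{\mathcal F}^{(h-1)})\to H^*(\mathcal C_{\mathcal F}^{(h)})\to H^{*-1}(\mathcal C_{\mathcal F}^{(h-1)})\psi_h\to 0$; iterating from the acyclicity result Proposition \ref{aciclico} gives $H^*(\mathcal C_{\mathcal F}^{(h)})\simeq\bigwedge(\psi_1,\ldots,\psi_h)$. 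You instead recognize $(D_{\bullet,\mathcal F},d)\to\mathbb Q$ (graded by $\tau$-count) as a free $R$-resolution of $\mathbb Q$, compare it via the chain map $e_j\mapsto\psi_j$ with the Koszul resolution, invoke the comparison theorem to get a homotopy equivalence, and apply $-\otimes_R\mathbb Q$ to conclude $H^*(\mathcal C_{\mathcal F})\simeq\bigwedge(\psi_1,\ldots,\psi_n)$. Both arguments ultimately exploit the same two inputs — $R$-freeness of $D_{\mathcal F}$ (hence regularity of $\xi_1,\ldots,\xi_n$ on it) and Proposition \ref{aciclico} — but yours packages the induction into the Koszul comparison theorem, which is more conceptual and makes the answer visibly $\operatorname{Tor}_\bullet^R(\mathbb Q,\mathbb Q)$, while the paper's version is more elementary and self-contained, avoiding the (mild) bookkeeping you correctly flag of translating the cohomological acyclicity of Proposition \ref{aciclico} into vanishing of homology in the $\tau$-count grading, and avoiding reliance on the homotopy-equivalence form of the comparison theorem.
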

\begin{proof} We shall deduce this by induction from a slightly more general fact. For any $0\leq h\leq n$ consider 
$$\mathcal C_{\mathcal F}^{(h)}=\begin{cases} \D_{\mathcal F} \ \ \text{ if}\  h=0\\  \D_{\mathcal F}/(\xi_1,\ldots ,\xi_h) \ \ \text{ if}\  h>0\end{cases}.$$

Our claim is that for every $h$, $H^*(\mathcal C_{\mathcal F}^{(h)})\simeq \bigwedge (\psi_1,\ldots ,\psi_h)$.

For $h=0$, $\mathcal C_{\mathcal F}^{(h)}=\D_{\mathcal F}$ and our claim is Proposition \ref{aciclico}.

We proceed by induction on $h$ and assume the claim proved for $h-1$. By reasoning as above we deduce that $\psi_h$ is a cocycle in $Z(\mathcal C_{\mathcal F}^{(h)})$ and hence gives a class in $H^1(\mathcal C_{\mathcal F}^{(h)})$. 

Since clearly $\xi_h$ is a non zero divisor in $\mathcal C_{\mathcal F}^{(h-1)}$ we take the exact sequence
\begin{equation}\label{prod} \CD 0\to \mathcal C_{\mathcal F}^{(h-1)}[-2]@         >\circ \xi_h>>  \mathcal C_{\mathcal F}^{(h-1)}@         > >>\mathcal C_{\mathcal F}^{(h)}\to 0.\endCD \end{equation} the corresponding long exact sequence in cohomology and using the $\xi_h$ being a coboundary induces the trivial homomorphism in cohomology, we deduce the exact sequence
\begin{equation}\label{exacta} \CD 0\to H^*( \mathcal C_{\mathcal F}^{(h-1)})\to H^*( \mathcal C_{\mathcal F}^{(h)})\to H^{*-1}( \mathcal C_{\mathcal F}^{(h-1)})\psi_h\to 0.\endCD \end{equation}
Since by induction $H^*( \mathcal C_{\mathcal F}^{(h-1)})\simeq \bigwedge (\psi_1,\ldots ,\psi_{h-1})$, this implies our claim.
\end{proof}
We finish by remarking that clearly the map $  \zeta_{\mathcal G}^{\mathcal F}$ is a map of $R$-modules, so it induces a map 
$$\chi_{\mathcal G}^{\mathcal F}:\mathcal C_{\mathcal F}\to \mathcal C_{\mathcal G}$$
and we may also consider \begin{equation}\label{defini}\mathcal C=\D/(\xi_1,\ldots ,\xi_n) =\lim_{\mathcal F} \mathcal C_{\mathcal F}.\end{equation}
It is immediate to see that $\mathcal C$ inherits a direct sum decomposition
$$\mathcal C=\bigoplus_m \mathcal C_m,$$
with $\mathcal C_m=\lim_{\mathcal F} \mathcal C_{m, {\mathcal F}}=\lim_{\mathcal F} \D_{m, {\mathcal F}}/(\xi_1,\ldots ,\xi_n)$.
In particular for $m=0$  $$\mathcal C_0=\mathcal C(T)=\mathcal B_T(T)/(\xi_1,\ldots ,\xi_n) $$  is the ring of conditions of the torus $T$.

Furthermore the $\chi_{\mathcal G}^{\mathcal F}$ are quasi isomorphism and we deduce that
also for $\mathcal C$ we have 
$$H^*(\mathcal C,d)\simeq \bigwedge (\psi_1,\ldots ,\psi_n)\simeq H^*(T).$$

\section{Toric arrangements}
\label{sec:toric}
Let us now recall from the Introduction the definition of a toric arrangement.  
A {\em layer} in \(T\) is the subvariety 
$$\mathcal K_{\Gamma,\phi}=\{t\in T|\, \chi(t)=\phi(\chi),\, \forall \chi\in \Gamma\}$$
where $\Gamma$ is a split direct summand of  $X^*(T)\cong \Z^n$ and  $\phi:\Gamma \to \mathbb C^*$ is a homomorphism.

A  toric arrangement \(\A\) is given by   finite set of  layers \(\A=\{\mathcal K_{1},...,\mathcal K_{m}\}\) in $T$.

In \cite{DCG1}  it is shown   how to construct  {\em projective wonderful models} for  the complement \(\emme(\A)=T-\bigcup_i \mathcal K_i\).  

A projective wonderful model    is a smooth projective  variety \ containing \(\emme(\A)\) as an open set and such that the complement of  \(\emme(\A)\) is  a divisor with normal crossings and smooth irreducible components. 
As we mentioned in the Introduction, we have  \(\emme(\A)=\emme(\tilde \A)\), where \(\tilde \A\) is the saturation of \(\A\), i.e. the arrangement consisting of all the  layers which are obtained as connected components of intersections of layers in $\mathcal A$.

Therefore from now on, for brevity of notation, we are going to assume \(\A=\tilde \A\).

%
%
%
%
%

Let us put $V=\hom_\Z(X^*(T),\R)= X_*(T)\otimes_{\Z} \R$.
A layer $\mathcal K_{\Gamma,\phi}$ is  a coset with respect to the torus 
$T_\Gamma=\cap_{\chi\in \Gamma}Ker(e^{2\pi\imath \chi})$, and  we can consider  
  the subspace $$V_\Gamma=\{v\in V|\, \langle\chi,v\rangle=0,\, \forall \chi\in \Gamma\}.$$
   Since $X^*(T_\Gamma)=X^*(T)/\Gamma$, $V_\Gamma$ is naturally isomorphic to $\hom_\Z(X^*(T_\Gamma),\R)= X_*(T_\Gamma)\otimes_{\Z} \R$.

  \begin{definition} Let $\mathcal F$ be a fan in $V$. A finite set   $\{\chi_1,\ldots ,\chi_s\}$ of vectors in $X^*(T)$ is said to have equal sign with respect to $\mathcal F$ if for each $i=1,\ldots ,s$ and each cone $C\in \mathcal F$, the function 
  $\langle\chi_i,-\rangle$ has constant sign on $C$, i.e. it is either non negative or non positive on $C$.\end{definition}

In \cite{DCG1} (see Proposition 6.1)  it was shown how to construct a projective smooth   \(T\)-embedding  \(X_{\mathcal F}\) whose fan \({\mathcal F}\) in \(V\) has the following property. For every \(\Gamma_i\)   there is an integral  basis of $\Gamma_i$, $\chi_1,\ldots ,\chi_s$, which has   equal sign with respect to $\mathcal F$.  The basis $\chi_1,\ldots ,\chi_s$ is called an {\em equal sign basis} for \(\Gamma_i\).

In fact by the same proof one can even show that one can construct $\mathcal F$  such that for any pair of layers  $\mathcal K_{\Gamma,\phi}\subset \mathcal K_{\Gamma',\psi}\in \A$,   
there is an equal sign basis for 
$\Gamma$ whose  intersection with $\Gamma'$ is an equal sign basis for $\Gamma'$. 

In view of this we define

\begin{definition} Let  \(\A\) be a toric toric arrangement.  A  smooth  projective fan $\mathcal F$  is compatible with \(\A\) if for any pair of layers  $\mathcal K_{\Gamma,\phi}\subset \mathcal K_{\Gamma',\psi}\in \A,$   
there is an equal sign basis for 
$\Gamma$ whose  intersection with $\Gamma'$ is an equal sign basis for $\Gamma'$. \end{definition}

In what follows we are always going to consider     fans $\mathcal F$ compatible with \(\A\).
Once such $\mathcal F$ has been constructed,  the  strategy used in \cite{DCG1}  is to  first embed the torus \(T\)  in      \(X_{\mathcal F}\).



In such a toric variety  \(X_{\mathcal F}\)  consider the closure  \(\overline {{\mathcal K}}_{\Gamma,\phi}\) of a layer. This  closure turns out to be a toric variety, whose explicit description is provided by  \cite{DCG1}. 

\begin{theorem}[Proposition 3.1 and Theorem 3.1 in \cite{DCG1}]
\label{faneH} For every layer    \(\mathcal K_{\Gamma,\phi}\), let \(T_\Gamma\) be the corresponding subtorus and let $V_{\Gamma}=\{v\in V|\, \langle\chi,v\rangle=0,\, \forall \chi\in \Gamma\}$. Then, 
\begin{enumerate}
\item For every cone $C\in \mathcal F$, its relative interior is either entirely contained in $V_{\Gamma}$ or disjoint from $V_{\Gamma}$.
\item The collection of cones $C\in \mathcal F$ which are contained in $V_{\Gamma}$ is a smooth fan $\mathcal F_{\Gamma}$.

\item   \(\overline {{\mathcal K}}_{\Gamma,\phi}\) is a smooth $T_\Gamma$-variety whose fan is $\mathcal F_{\Gamma}$.
\item  Let $\mathcal O$ be a $T$ orbit in $X:=X_{\mathcal F}$ and let $C_{\mathcal O}\in \mathcal F$ be the corresponding cone. Then 
\begin{enumerate}
\item If $C_{\mathcal O}$ is not contained in $ V_{\Gamma}$, $\overline {\mathcal O}\cap \overline {{\mathcal K}}_{\Gamma,\phi}=\emptyset$.
\item If $C_{\mathcal O}\subset V_{\Gamma}$, $ {\mathcal O}\cap \overline {{\mathcal K}}_{\Gamma,\phi}$ is the $T_\Gamma$ orbit in $\overline {{\mathcal K}}_{\Gamma,\phi}$ corresponding to 
$C_{\mathcal O}\in \mathcal F_{\Gamma}$. 
\end{enumerate} \end{enumerate}\end{theorem}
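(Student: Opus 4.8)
The plan is to describe $\overline{\mathcal K}_{\Gamma,\phi}$ one $T$-orbit of $X_{\mathcal F}$ at a time, exploiting that $\mathcal F$ is compatible with $\A$: fix an integral equal sign basis $\chi_1,\dots,\chi_s$ of $\Gamma$ with respect to $\mathcal F$, so that each linear function $\langle\chi_i,-\rangle$ has constant sign on every cone of $\mathcal F$. Throughout I will use the elementary fact that a linear functional which is, say, nonnegative on a cone $C$ and vanishes at a point of the relative interior of $C$ must vanish on all of $C$.

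Claims (1) and (2) are quickly disposed of. For (1) write $V_\Gamma=\bigcap_i\{\langle\chi_i,-\rangle=0\}$; on a cone $C$ each $\langle\chi_i,-\rangle$ is either identically zero or of constant strict sign on the relative interior, so the relative interior of $C$ is contained in $V_\Gamma$ exactly when all the $\chi_i$ vanish on $C$, and is disjoint from it otherwise. The same reasoning shows that $C\cap V_\Gamma=\bigcap_i\bigl(C\cap\{\langle\chi_i,-\rangle=0\}\bigr)$ is an intersection of faces of $C$, hence itself a face of $C$; I will come back to this point, which is where the equal sign hypothesis is genuinely used. Claim (2) is then immediate as far as the fan axioms go, and smoothness follows because $\Gamma$ being a split summand of $X^*(T)$ forces $X_*(T_\Gamma)=X_*(T)\cap V_\Gamma$ to be a saturated sublattice, so that primitive generators of a cone $C\in\mathcal F_\Gamma$ which form part of a $\mathbb Z$-basis of $X_*(T)$ also form part of a $\mathbb Z$-basis of $X_*(T_\Gamma)$.

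The core of the argument is a uniform description of $\overline{\mathcal K}_{\Gamma,\phi}\cap\mathcal O_C$. Identify a point of the affine chart $U_C$ with a semigroup homomorphism $\sigma\colon C^\vee\cap X^*(T)\to(\mathbb C,\cdot)$, and recall that $\mathcal O_C$ is the locus where $\sigma$ is nonzero exactly on $C^\perp\cap X^*(T)$. For each $i$, one of $\pm\chi_i$, say $\chi_i'$, lies in $C^\vee$; then $\chi_i'$ is a regular (monomial) function on $U_C$ and $\chi_i'-\phi(\chi_i')$ vanishes on $\mathcal K_{\Gamma,\phi}$, hence on its closure, so $\sigma(\chi_i')=\phi(\chi_i')\neq 0$ for every $\sigma\in\overline{\mathcal K}_{\Gamma,\phi}\cap U_C$. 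If $C\not\subset V_\Gamma$ then for some $i$ we have $\chi_i'\in(C^\vee\setminus C^\perp)\cap X^*(T)$, on which $\sigma$ must vanish once $\sigma\in\mathcal O_C$: a contradiction, so $\overline{\mathcal K}_{\Gamma,\phi}\cap\mathcal O_C=\emptyset$. Since every cone in the star of such a $C$ also fails to lie in $V_\Gamma$, this yields $\overline{\mathcal O}\cap\overline{\mathcal K}_{\Gamma,\phi}=\emptyset$, which is (4)(a). If instead $C\subset V_\Gamma$, then $\Gamma$ is a saturated sublattice of $M_C:=C^\perp\cap X^*(T)$, so the subtorus $S_C\subset\mathcal O_C$ with character lattice $M_C/\Gamma$ is well defined and $\{\sigma\in\mathcal O_C:\sigma|_\Gamma=\phi\}$ is a single $S_C$-coset $g_CS_C$; the computation just made shows $\overline{\mathcal K}_{\Gamma,\phi}\cap\mathcal O_C\subseteq g_CS_C$. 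For the reverse inclusion pick $\lambda\in X_*(T)$ in the relative interior of $C$ and note that for $t\in\mathcal K_{\Gamma,\phi}$ the point $\lambda(z)\cdot t$ tends, as $z\to 0$, to the point of $\mathcal O_C$ obtained by restricting $\chi\mapsto\chi(t)$ to $M_C$; as $t$ runs over $\mathcal K_{\Gamma,\phi}$ these limits sweep out all of $g_CS_C$, because $T_\Gamma$ surjects onto $S_C$ (again using that $M_C$, hence $M_C/\Gamma$, is saturated). This proves (4)(b), and exhibits the $g_CS_C$, $C\in\mathcal F_\Gamma$, as precisely the $T_\Gamma$-orbits of $\overline{\mathcal K}_{\Gamma,\phi}$, with the expected dimensions.

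It then remains to upgrade this orbit decomposition to the isomorphism with the toric variety of $\mathcal F_\Gamma$ asserted in (3). For $C\in\mathcal F_\Gamma$ one splits off $\Gamma$ from $C^\vee\cap X^*(T)$ and divides by the relations $\chi_i-\phi(\chi_i)$; this identifies $\overline{\mathcal K}_{\Gamma,\phi}\cap U_C$ with $\operatorname{Spec}\mathbb C[C^\vee\cap X^*(T_\Gamma)]$, the affine toric variety attached to $C$ in $V_\Gamma$, which is normal and, by (2), smooth. The delicate point — and the step I expect to be the main obstacle — is dealing with the charts $U_C$ for $C\notin\mathcal F_\Gamma$: one must check that $\overline{\mathcal K}_{\Gamma,\phi}\cap U_C=\overline{\mathcal K}_{\Gamma,\phi}\cap U_{C\cap V_\Gamma}$, which uses precisely that $C\cap V_\Gamma$ is a face of $C$ (established in (1), and false in general without the equal sign hypothesis, where indeed $\overline{\mathcal K}_{\Gamma,\phi}$ may fail to be normal). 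Granting this, the charts indexed by $\mathcal F_\Gamma$ already cover $\overline{\mathcal K}_{\Gamma,\phi}$ and glue along the same face relations as in $X_{\mathcal F_\Gamma}$, which completes the proof.
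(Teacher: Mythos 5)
This statement is imported verbatim from \cite{DCG1} (Proposition 3.1 and Theorem 3.1 there); the paper under review cites it and does not reprove it, so there is no in-paper argument to compare against. What follows is therefore an assessment of your sketch on its own terms, and against what I recall of the cited reference.

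Your overall strategy is the right one and, as far as I can tell, essentially coincides with the argument in \cite{DCG1}: the equal sign basis forces $C\cap V_\Gamma$ to be an intersection of faces of $C$, hence a face, which at once gives (1)--(2) and controls the chart gluing in (3); the orbit-by-orbit analysis in the semigroup-homomorphism model of $U_C$, with the one-parameter-subgroup limit for the reverse inclusion, is the standard way to get (4). The smoothness argument in (2) via saturation of $X_*(T_\Gamma)\subset X_*(T)$ is correct, and the remark that without the equal sign hypothesis $\overline{\mathcal K}_{\Gamma,\phi}$ may fail to be normal is exactly the right thing to flag.

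The one place where the sketch is thinner than it looks is the step in (3) where you "split off $\Gamma$ from $C^\vee\cap X^*(T)$ and divide by the relations $\chi_i-\phi(\chi_i)$." Since $C^\vee\cap X^*(T)$ is a monoid and not a lattice, the splitting is not literal; what is actually needed, and is the scheme-theoretic core of Proposition 3.1 in \cite{DCG1}, is that the ideal of $\overline{\mathcal K}_{\Gamma,\phi}$ in $\mathbb C[C^\vee\cap X^*(T)]$ is \emph{generated} by the $s$ elements $\chi_i'-\phi(\chi_i')$ (with $\chi_i'$ the representative of $\pm\chi_i$ lying in $C^\vee$), and that the quotient is the monoid algebra of $C^\vee_{V_\Gamma}\cap X^*(T_\Gamma)$. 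Your orbit computation in (4) only pins down $\overline{\mathcal K}_{\Gamma,\phi}\cap U_C$ as a set, not as a scheme, so it does not by itself yield this. The equal sign hypothesis is again what makes this ideal computation go through (one uses it to decompose an arbitrary $\chi\in C^\vee\cap X^*(T)$ compatibly into a $\Gamma$-part and a complement that both stay in $C^\vee$), and this should be spelled out before one can conclude that the affine charts glue to $X_{\mathcal F_\Gamma}$.
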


Let us   denote  by \({\mathcal Q}'\) (resp.  \({\mathcal Q}\) )  the set whose elements are  the subvarieties  \(\overline {{\mathcal K}}_{\Gamma_i,\phi_i}\)   of \(X_{\mathcal F}\) (resp. the subvarieties  \(\overline {{\mathcal K}}_{\Gamma_i,\phi_i}\)  and the irreducible components of the complement \(X_{\mathcal F}- T\)). We then denote by \(\elle'\) (resp. \(\elle\)) the poset  made by all the connected components of all the intersections of some of the  elements of \({\mathcal Q}'\) (resp.  \({\mathcal Q}\) ).
In \cite{DCG1} (Theorem 7.1) we have shown that the  family   \(\elle\) is an arrangement  of subvarieties in \(X_{\mathcal F}\) in the sense of Li's paper \cite{li}. As a consequence also \(\elle'\), being contained in $\elle$ and closed under intersection, is an arrangement  of subvarieties.


Let   \(\elle'=\{G_1,...,G_m\}\),  ordered in such a way  that   if \(G_i\subsetneq  G_j\)  then \(i<j \). Thus for each $i=1,\ldots ,m$ we have $G_i=\overline {\mathcal K}_{\Gamma_i,\phi_i}$ for  a suitable  pair $(\Gamma_i,\phi_i)$.

A this point, following Li's construction for \(\elle'\)  we  construct the variety    \(Y(X_{\mathcal F})\), which  is a projective wonderful model for  \(\emme(\A)=X_{\mathcal F}-\bigcup_{A\in \elle}A.\) 
This means that    \(Y(X_{\mathcal F})\) contains  \(\emme(\A)\) as a dense open set whose
complement is a divisor with smooth irreducible components having transversal intersections.


More in detail we choose \(\elle'\) as a  {\em building set} (see Definition 2.5 in \cite{DCG2}). Then we   obtain  \(Y(X_{\mathcal F})\) starting from \(X_{\mathcal F}\) and  blowing up the elements of   \(\elle'\) (after the first step, their  transforms) in any order such that if \(G_{i_1}\subset G_{i_2}\) we blow up (the  transform of ) \(G_{i_1}\) before (the transform of ) \(G_{i_2}\). In particular we notice that the ordering we chose in \(\elle'\)  is one of the admissible orderings  to perform  these blowups.


In Proposition 5.2 of \cite{DCG2} we observed  that  \(Y(X_{\mathcal F})\) is isomorphic to the variety \(Y^+(X_{\mathcal F})\) obtained by choosing as a building set the set \(\elle^+=\elle' \cup \{D_{c_i}\}_{i=1,...,N}\) where for every vertex \(c_i\) of \(\mathcal F\),  \(D_{c_i}\) is the associated irreducible divisor  in the boundary of \(X_{\mathcal F}\).  The isomorphism is an immediate consequence  of the fact that the  \(D_{c_i}\)'s (and hence their transforms in   \(X_{\mathcal F}\)) are   divisors.

From Theorem 1.2 in \cite{li} it follows that  \(Y^+(X_{\mathcal F})\setminus \emme(\A)\)  is a divisor with normal crossings whose   irreducible components are smooth and indexed by \(\elle^+\). For $j=1,\ldots m$, we denote by $D_{G_j}$ the component of \(Y^+(X_{\mathcal F})\setminus \emme(\A)\) corresponding to $G_i$ and, by abuse of notation,  we still denote by \(D_{c_i}\) its transform in \(Y^+(X_{\mathcal F})\), so that
$$Y^+(X_{\mathcal F})\setminus \emme(\A)=(\cup_{j=1}^m D_{G_j})\cup(\cup_{c_i}D_{c_i}).$$
It follows from the theory of torus embeddings that for a collection of rays $c_{i_1},\ldots c_{i_t}$ the intersection $\cap_{h=1}^tD_{c_{i_h}}$ is non empty if and only if $C=C( c_{i_1},\ldots ,c_{i_t})$ is a cone in $\mathcal F$.

Furthermore from  the  general definition of nested set  (see  Definition 5.6 of \cite{li} and also Definition 2.7 in \cite{DCG2}),    one can easily check that in our special situation,  if we take a subset  $\underline G=\{G_{j_1}, G_{j_2}, \ldots, G_{j_s}\}$ of \(\elle'\) and   a cone  $C=C(c_{i_1},\ldots c_{i_t})$, the 
intersection 
$$Y_{(\underline G, C)}:=(\cap_{k=1}^sD_{G_{j_k}})\cap(\cap_{h=1}^tD_{c_{i_h}})$$
is non empty if and only if 
$G_{j_1}\subsetneq G_{j_2}\subsetneq \cdots \subsetneq G_{j_s}$ and 
 $C\subset V_{G_{j_1}}$. In this case $Y_{(\underline G, C)}$ is smooth and irreducible.

\begin{remark}
From now on we will identify \(Y(X_{\mathcal F})\) and \(Y^+(X_{\mathcal F})\).
\end{remark}

%
%
%

In \cite{DCG2} we have described  the cohomology ring \(H^*(Y(X_{\mathcal F}),\Z)\) 
 by generators and relations in a greater generality. Here we shall illustrate this result under our assumption, leaving the straightforward translation to the reader.
We refer to \cite{DCG2} for the geometric explanation of our relations.

To simplify notation we are going to add to \(\elle'\) the element $G_{m+1}:=X_{\mathcal F}$.
We need to introduce certain polynomials in  $B_{\mathcal F}[t_1,\ldots ,t_m]$.


Take a pair   $(i,j)$ with   \(i\in \{1,...,m\}\), and    \(j\in \{1,...,m+1\}\) in such a way that  \( G_i\subsetneq G_j\).
Consider the set  \(B_i=\{h\: | \: G_h\subseteq G_i\}\).

    Take an equal sign basis $\underline \chi$  of $\Gamma_i$ whose intersection with  $\Gamma_j$ (if $j=m+1$,  $\Gamma_{m+1}=\{0\}$) is a basis of $\Gamma_j$. We then set
$$P^{G_j}_{G_i}(t):=\prod_{\chi\in \underline \chi\setminus (\underline \chi\cap \Gamma_j)}(t-\chi_{\mathcal F}^-)\in B_{\mathcal F}[t]$$
with
$$\chi_{\mathcal F}^-=\sum_{c\ \text{ray}}\min(0,\langle \chi,c\rangle) x_c,$$
and   
  \[ F(i,j)=P^{G_j}_{G_i}(\sum_{h\in B_i}-t_{h})t_{j}, \]  with $t_{m+1}:=1$.

%
%
From    \cite{DCG2}   we easily get 

%

\begin{theorem}[Proposition 6.3 and Theorem 7.1 in \cite{DCG2}]
\label{propindependent}
Let  $I$ be the ideal in  $ B_{\mathcal F}[t_1,\ldots ,t_m]$  generated by  \begin{enumerate}\item the products 
$t_ix_c$ for every  ray $c\in  {\mathcal F} $  that does not belong to \(V_{\Gamma_i}\).
\item  the products 
 $t_st_r$ if $G_s$ and $G_r$ are not comparable.
\item the polynomials   $F(i,j)$, for $G_i\subsetneq G_j$.
 \end{enumerate}
 Then 
 \begin{enumerate}[(i)]\item
 $I$ does not depend on the choice of the polynomials  $F(i,j)$.
\item The cohomology ring \(H^*(Y(X_{\mathcal F}), \Q)\) is isomorphic to $B_{\mathcal F}[t_1,\ldots ,t_m]/I$.
\end{enumerate}
\end{theorem}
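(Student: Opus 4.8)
The plan is to deduce both statements from the general presentation of the integral cohomology ring of a projective wonderful model of a toric arrangement proved in \cite{DCG2} (Proposition 6.3 and Theorem 7.1), specialised under the compatibility of \(\mathcal F\) with \(\A\); assertion (i), which is what makes the quotient in (ii) meaningful, I would also verify directly along the lines below.

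For (i), fix \(i,j\) with \(G_i\subsetneq G_j\) (so \(\Gamma_j\subseteq\Gamma_i\)) and put \(T:=\sum_{h\in B_i}(-t_h)\). As \(G_h\subseteq G_i\) implies \(\Gamma_i\subseteq\Gamma_h\), hence \(V_{\Gamma_h}\subseteq V_{\Gamma_i}\), relation (1) already forces \(T\,x_c=0\) for every ray \(c\notin V_{\Gamma_i}\). If \(\chi\in\Gamma_i\) then \(\langle\chi,c\rangle=0\) for \(c\in V_{\Gamma_i}\), so \(\chi_{\mathcal F}^-\) is supported on rays outside \(V_{\Gamma_i}\) and therefore \(T\,\chi_{\mathcal F}^-\in I\). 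Expanding \(P^{G_j}_{G_i}(T)=\prod_{\chi}(T-\chi_{\mathcal F}^-)\) and discarding every monomial carrying both a positive power of \(T\) and a factor \(\chi_{\mathcal F}^-\) (each such monomial lies in \(I\)), one is left with
\[
P^{G_j}_{G_i}(T)\;\equiv\;T^{\,r}+(-1)^{r}\prod_{\chi\in\underline\chi\setminus(\underline\chi\cap\Gamma_j)}\chi_{\mathcal F}^-\pmod I,\qquad r=\rk\Gamma_i-\rk\Gamma_j.
\]
Since \(T^{\,r}\) does not involve the chosen basis, the independence of \(F(i,j)=P^{G_j}_{G_i}(T)\,t_j\) modulo \(I\) reduces to the independence of \(t_j\prod_{\chi\in\underline\chi\setminus(\underline\chi\cap\Gamma_j)}\chi_{\mathcal F}^-\) modulo \(I\). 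For this one uses \(\chi=0\) in \(B_{\mathcal F}\) (so \(\chi_{\mathcal F}^-=-\chi_{\mathcal F}^+\), where \(\chi_{\mathcal F}^+=\sum_c\max(0,\langle\chi,c\rangle)x_c\)) together with relations (1) and (2) to recognise this product, up to sign, as \(t_j\) times an intrinsic class — the fundamental class of \(G_i\) computed inside \(G_j=\overline{\mathcal K}_{\Gamma_j,\phi_j}\) — which a change of equal sign basis alters only by elements of (1). This is the verification carried out in \cite{DCG2} Proposition 6.3.

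For (ii), recall that \(Y(X_{\mathcal F})=Y^+(X_{\mathcal F})\) is Li's wonderful model of \(X_{\mathcal F}\) for the building set \(\elle^+=\elle'\cup\{D_{c_i}\}_i\); the general machinery of \cite{li} together with the toric computations of \cite{DCG2} presents \(H^*(Y(X_{\mathcal F}),\Q)\) as \(H^*(X_{\mathcal F},\Q)=B_{\mathcal F}\), with the \(x_c\) as generators, adjoined the exceptional divisor classes, modulo the ``non-nested'' vanishing relations and the Chern-class relations of the successive blow-ups. Setting \(t_j:=[D_{G_j}]\) with the sign convention of \cite{DCG2} and \(t_{m+1}=1\) (as \(G_{m+1}=X_{\mathcal F}\) is not blown up), the description of the nonempty intersections \(Y_{(\underline G,C)}\) recalled above shows exactly when a subset of \(\elle^+\) is nested; hence the vanishing relations become \(t_st_r=0\) for \(G_s,G_r\) incomparable (relation (2)), the monomial relations already built into \(B_{\mathcal F}\), and \(t_ix_c=0\) whenever \(c\notin V_{\Gamma_i}\) (relation (1), which is also forced by Theorem \ref{faneH}(4)(a)). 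Finally, using the compatibility of \(\mathcal F\) with \(\A\) to choose, for each \(G_i\subsetneq G_j\), an equal sign basis of \(\Gamma_i\) restricting to a basis of \(\Gamma_j\), the Chern-class relations of \cite{DCG2} take precisely the form \(F(i,j)=P^{G_j}_{G_i}(\sum_{h\in B_i}(-t_h))\,t_j\); together with (i) this yields the claimed presentation.

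The step I expect to be the real obstacle is the final point of (i): proving that \(t_j\prod_{\chi}\chi_{\mathcal F}^-\) is genuinely basis-free modulo \(I\), and not merely ``morally'' so, requires playing the linear relations \(\sum_c\langle\chi,c\rangle x_c=0\) of \(B_{\mathcal F}\) against the vanishing relations (1) with some care, and this is the technical heart of \cite{DCG2} Proposition 6.3. Part (ii), by contrast, is mostly a matter of matching indexing and sign conventions with \cite{li} and \cite{DCG2}, and rational coefficients — all that is needed here — introduce no further difficulty.
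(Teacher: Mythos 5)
The paper does not actually supply a proof of this statement: the theorem header already credits it to \cite{DCG2} (Proposition~6.3 and Theorem~7.1), and the text preceding it just says the result follows from there. So there is no in-paper argument to compare against. Your sketch is therefore best judged on internal consistency and on whether it defers to \cite{DCG2} at the same places the authors implicitly do.

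On that score your outline is sound. The reduction in your treatment of (i) is correct: for $\chi\in\Gamma_i$ the function $\chi_{\mathcal F}^-$ involves only rays $c\notin V_{\Gamma_i}$, and since $\Gamma_i\subseteq\Gamma_h$ (hence $V_{\Gamma_h}\subseteq V_{\Gamma_i}$) for every $h\in B_i$, relation (1) indeed gives $T\,\chi_{\mathcal F}^-\in I$ with $T=\sum_{h\in B_i}(-t_h)$, so the binomial expansion of $P^{G_j}_{G_i}(T)$ collapses modulo the basis-independent part of $I$ to $T^r+(-1)^r\prod_\chi\chi_{\mathcal F}^-$. You also correctly isolate the real content of (i): what must be shown is that $t_j\prod_\chi\chi_{\mathcal F}^-$, for $\underline\chi$ ranging over equal sign bases adapted to $\Gamma_j\subset\Gamma_i$, is well defined modulo the ideal generated by relations (1) and (2) alone; and you rightly point out that one must argue basis-independence of the \emph{ideal}, i.e.\ that a generator built from one basis already lies in the ideal built from another. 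You do not prove this, but you say so explicitly and defer to \cite{DCG2} Proposition~6.3 — which is exactly what the present paper does. Likewise your treatment of (ii) (Li's blow-up presentation, Chern-class relations, and the identification of nested sets via the description of the nonempty $Y_{(\underline G,C)}$) matches the route the authors indicate. So: no genuine gap relative to what the paper itself establishes, and the approach is the same — a citation to \cite{DCG2}, which you have usefully unpacked.
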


%
%
%

More generally one can compute the cohomology algebra of every stratum $Y_{(\underline G, C)}$ of $Y(X_{\mathcal F})$ as follows (in fact in Theorem 9.1 of \cite{DCG2} one of the relations, the relation (1),  was stated in a incorrect way;  this was corrected in Theorem 4.3 of \cite{mocipagaria2020}). 

First of all one shows that the restriction map

$$r_{(\underline G, C)}:H^*(Y(X_{\mathcal F}), \Q)\to H^*(Y_{(\underline G, C)}, \Q)$$
is surjective.

If a ray $c$ is such that $c\notin V_{\Gamma_{j_1}}$, 
\begin{equation}\label{prima3}r_{(\underline G, C)}(x_c)=0. \end{equation}

If $G_i$  is such that $C$ is not contained in $V_{\Gamma_i}$ or $\underline G\cup\{G_i\}$  
cannot be reordered into a  flag we have 
\begin{equation} \label{prima6}r_{(\underline G, C)}(t_i)=0 .\end{equation}



  
  Let us take a pair   $(i,j)$ with   \(i\in \{1,...,m\}\), and    \(j\in \{1,...,m+1\}\) in such a way that  \( G_i\subsetneq G_j\) and set $\mathcal S_i=\{s|G_{i_s}\in\underline G, G_{i_s}\supsetneq G_i\}$.

Let us start with a pair $(i,m+1 )$. 
 If  $\mathcal S_i=\emptyset$ one has the  relation $$F(i,m+1)=P^{G_{m+1}}_{G_i}(\sum_{h\in B_i}-t_{h}).$$
 which already holds in \(H^*(Y(X_{\mathcal F}))\).
 
 Otherwise, set $k=\min (s|\ s\in \mathcal S_i)$. 
One has the  relation  \begin{equation}\label{prima2}F_{\underline G}(i,m+1)=P^{G_{i_k}}_{G_i}(\sum_{h\in B_i}-t_{h})\end{equation}
in $H^*(Y_{(\underline G,C)},\mathbb Q)$.

\bigskip
Now let us consider the case of a pair $(i,j)$ with $j\leq m$. 
If  $\underline G\cup\{G_j\}$ cannot be reordered into a flag we already know from the relation \eqref{prima6} that $r_{(\underline G, C)}(t_j)=0$.
  

Assume now that $\underline G\cup\{G_j\}$ can be reordered in a flag. Then also $\mathcal S_i\cup \{G_j\}$ is a flag and let $H$ be its smallest element. 

If $H=G_j$ and $G_j\notin \mathcal S_i$, we get the relation        
 $$  F(i,j)=P^{G_{j}}_{G_i}(\sum_{h\in B_i}-t_{h})t_j.$$
 which already holds in \(H^*(Y(X_{\mathcal F}))\).
 
 If $H=G_{i_k}\in \mathcal S_i$ we get the relation
  \( F_{\underline G}(i,j)=P^{G_{i_k}}_{G_i}(\sum_{h\in B_i}-t_{h})t_{j},\) which is a consequence of (\ref{prima2}).

\begin{theorem}
\label{teorelationsstrata} For any pair $(\underline G, C)$ with $C\subset V_{G_{i_1}}$,   the cohomology ring  \(H^*(Y_{(\underline G, C)}, \Q)\) is the quotient of the polynomial ring $B_{C,\mathcal F}[t_1,\ldots ,t_m]$  modulo the ideal generated by  
   \begin{enumerate}\item the image of the ideal $I$ modulo the quotient homomorphism
 $$\pi:B_{\mathcal F}[t_1,\ldots ,t_m]\to B_{C,\mathcal F}[t_1,\ldots ,t_m].$$
\item The relations (\ref{prima3}),  \eqref{prima6} and (\ref{prima2}).

\end{enumerate}
\end{theorem}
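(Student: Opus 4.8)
The plan is to derive the presentation from Theorem \ref{propindependent} together with the surjectivity of the restriction map $r_{(\underline G, C)}$ recalled above, carrying out in the present notation the argument of \cite{DCG2} with the correction supplied by \cite{mocipagaria2020}. Since $r_{(\underline G, C)}$ is surjective and $H^*(Y(X_{\mathcal F}),\Q)\cong B_{\mathcal F}[t_1,\ldots,t_m]/I$ by Theorem \ref{propindependent}, the composite $B_{\mathcal F}[t_1,\ldots,t_m]\to H^*(Y_{(\underline G, C)},\Q)$ is onto; it factors through $B_{C,\mathcal F}[t_1,\ldots,t_m]$ because $Y_{(\underline G, C)}$ maps to the toric subvariety $X_{C,\mathcal F}$ under the blow‑down $Y(X_{\mathcal F})\to X_{\mathcal F}$, so the classes $x_c$ act through their images in $H^*(X_{C,\mathcal F})=B_{C,\mathcal F}$. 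One then observes that all the listed relations lie in the kernel of $B_{C,\mathcal F}[t_1,\ldots,t_m]\to H^*(Y_{(\underline G, C)},\Q)$: the image $\pi(I)$ does because the relations of $I$ already vanish upstairs in $H^*(Y(X_{\mathcal F}),\Q)$, while \eqref{prima3}, \eqref{prima6} and \eqref{prima2} hold by the geometric meaning of $x_c$, $t_i$ and $F(i,m+1)$ (a boundary component disjoint from the stratum restricts to $0$; on the stratum the normal–bundle relation defining $F(i,m+1)$ picks up the extra factor coming from the smallest member of $\underline G$ lying above $G_i$) — these are exactly the identities proved geometrically in \cite{DCG2} and \cite{mocipagaria2020}. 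This produces a degree–preserving surjection from the ring in the statement onto $H^*(Y_{(\underline G, C)},\Q)$.

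It then remains to show this surjection is an isomorphism, which I would do by a graded dimension count. First one pins down the Poincar\'e polynomial of $Y_{(\underline G, C)}$: by Theorem \ref{faneH} and Li's description of the strata of a wonderful model \cite{li}, $Y_{(\underline G, C)}$ is built by Li's recipe as a bundle over the wonderful model of the arrangement obtained by localizing $\A$ at $G_{j_1}$, which lives in the smooth toric variety with fan $\mathcal F_{\Gamma_{j_1}}$; its Betti numbers are therefore explicitly determined by the combinatorial data and agree with those recorded in \cite{DCG2}. Next one exhibits a $\Q$–spanning set of the candidate quotient ring consisting of monomials in the $x_c$ and the $t_i$ in a normal form with respect to the relations — the same ``admissible monomial'' normal form used for the ambient presentation in Theorem \ref{propindependent}, further reduced using \eqref{prima3}, \eqref{prima6} and \eqref{prima2} — and checks that in each degree the number of such monomials is at most the corresponding Betti number. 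Since there is already a degree–preserving surjection onto $H^*(Y_{(\underline G, C)},\Q)$, equality of graded dimensions is forced and the surjection must be an isomorphism.

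The hard part is this last combinatorial step: one must verify that $\pi(I)$, \eqref{prima3}, \eqref{prima6} and the corrected relations \eqref{prima2} cut the polynomial ring down to exactly the predicted dimension and no further. This is precisely the point at which relation (1) of \cite[Theorem 9.1]{DCG2} was stated incorrectly and had to be replaced by the family \eqref{prima2} as in \cite[Theorem 4.3]{mocipagaria2020}; an inaccurate choice of the normalizing polynomials $F_{\underline G}(i,j)$ would either leave superfluous generators or introduce spurious relations, so the bookkeeping must be done with care. A minor preliminary point, needed for the statement to be well posed, is that $\pi(I)$ is independent of the auxiliary choices (the equal–sign bases and the polynomials $F(i,j)$), which follows from part (i) of Theorem \ref{propindependent} by applying the quotient map $\pi$. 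Alternatively one can bypass the global count by inducting on the number of members of $\underline G$ and on the codimension of $C$, stripping off one divisor $D_{G_{j_s}}$ or $D_{c}$ at a time and using the blow‑up and projective–bundle formulae together with the self‑similarity of Li's construction; the base case $\underline G=\emptyset$, $C=\{0\}$ is Theorem \ref{propindependent} itself.
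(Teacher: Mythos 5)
The paper does not actually prove Theorem~\ref{teorelationsstrata}: it is stated as a recalled result, citing Theorem~9.1 of \cite{DCG2} together with the correction given in Theorem~4.3 of \cite{mocipagaria2020}, and the surrounding text only sets up the restriction map $r_{(\underline G,C)}$ and the relations \eqref{prima3}, \eqref{prima6}, \eqref{prima2}. So there is no in-paper argument to compare yours against; what you have written is in effect a reconstruction of the argument that lives in those references.

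As a reconstruction, your outline is sound at the soft end and matches the setup the paper recalls: surjectivity of $r_{(\underline G,C)}$, the factorization through $B_{C,\mathcal F}[t_1,\ldots,t_m]$ via the blow-down to $X_{C,\mathcal F}$, and the check that $\pi(I)$ and \eqref{prima3}, \eqref{prima6}, \eqref{prima2} lie in the kernel. You also correctly isolate the genuine content: showing that these relations generate the \emph{whole} kernel. But that step is only announced, not carried out. The dimension count would require you to actually produce the admissible-monomial normal form and show it has the right graded cardinality, and the alternative blow-down induction would require, at each step, explicit control of the Chern class of the normal bundle of the exceptional divisor in terms of the $t_i$'s and $x_c$'s --- which is precisely where the corrected family \eqref{prima2} (replacing the erroneous relation~(1) of \cite[Theorem 9.1]{DCG2}) must be verified to be the right one, neither too weak nor too strong. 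In short: a correct strategy, consistent with the cited sources, but the crucial injectivity argument --- the actual content of the theorem --- remains unproved in your sketch, just as it is unproved (because cited) in this paper.
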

 
We can now apply this to give a presentation of the differential graded algebra associated to  \(Y(X_{\mathcal F})\) and the divisor with normal crossings  \(Y(X_{\mathcal F})\setminus \emme(\A)\) following 
\cite{Mor}. Recall that in our case this algebra is the direct sum
$$M_{\mathcal F}=\oplus_{(\underline G,C)}H^*(Y_{(\underline G, C)}, \Q)[-n_{(\underline G, C)}]$$
with $n_{(\underline G, C)}$ equal to  $\dim C+|\underline G|$ which is the codimension of $Y_{(\underline G, C)}$.

In order to do so, we  take the algebra  $\mathcal B=\mathbb Q[t_1,\ldots , t_m]\otimes \bigwedge (\kappa_1,\ldots ,\kappa_m)/ K$ where $K$ is the ideal generated by the products $t_it_j$, $t_i\kappa_j$, $\kappa_i\kappa_j$ whenever $G_i$ and $G_j$ are not comparable. 

We grade $\mathcal B$ by setting
 $\deg t_j=2$ and $\deg \kappa_j=1$ and we remark that  the usual differential on $\mathcal B=\mathbb Q[t_1,\ldots , t_m]\otimes \bigwedge (\kappa_1,\ldots ,\kappa_m)$ given by $d(\kappa_j)=t_j$ preserves  $K$  so that $\mathcal B$ inherits a degree 1 differential $d_{\mathcal B}$.

We can then consider the algebra $\mathcal C_{\mathcal F}\otimes \mathcal B$ with differential $d_{\mathcal C_{\mathcal F}}\otimes 1+1\otimes  d_{\mathcal B}$. 

Remark that $B_{\mathcal F}$ is the subalgebra of $\mathcal C_{\mathcal F}$ consisting of element of bidegree $(2n,0)$, $n\geq 0$ and so the polynomial ring $B_{\mathcal F}[t_1,\ldots t_m]$ is a subalgebra of $\mathcal C_{\mathcal F}\otimes \mathcal B$.  Using this remark we  can take   the ideal $\Theta_{\mathcal F}$  in  
  $\mathcal C_{\mathcal F}\otimes \mathcal B$ generated by the elements
\begin{enumerate}
\item $x_ct_j$,  $\tau_ct_j$, $x_c\kappa_j$, $\tau_c\kappa_j$, $c\notin V_{G_{j}}$. 
\item
 $F(i,j)$, for $G_j\supsetneq G_i$, $i=1,\ldots m$, $j=1,\ldots ,m+1$.
\item  $P^{G_{j}}_{G_i}(\sum_{h\in B_i}-t_{h}) \kappa_j,$ with   \(i\in \{1,...,m\}\), and    \(j\in \{1,...,m\}\) in such a way that  \( G_i\subsetneq G_j\).
\end{enumerate}
Observe that $\Theta_{\mathcal F}$  is preserved by the differential $d_{\mathcal C_{\mathcal F}}\otimes 1+1\otimes  d_{\mathcal B}$. It follows that we get an induced  differential $d_{N_{\mathcal F}}$ on the algebra $N_{\mathcal F}=\mathcal C_{\mathcal F}\otimes \mathcal B/\Theta_{\mathcal F}$. 

We know that $\Theta_{\mathcal F}$ is a graded ideal so that $N_{\mathcal F}$ is also graded and the differential $d_{ N_{\mathcal F}}$ is of degree 1.
\begin{theorem} The differential graded algebra $( N_{\mathcal F},d_{N_{\mathcal F}})$ is isomorphic to the Morgan algebra $(M_{\mathcal F}, d_{M_{\mathcal F}})$.\end{theorem}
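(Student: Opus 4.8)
The plan is to construct an explicit isomorphism of differential graded algebras $\Phi\colon N_{\mathcal F}\to M_{\mathcal F}$ by decomposing both sides along the same index set and matching the pieces through Theorem~\ref{teorelationsstrata}. The key observation is that the second component of the bigrading on $\mathcal C_{\mathcal F}\otimes\mathcal B$ counts the odd generators $\tau_c,\kappa_j$ occurring in a monomial, and that $\Theta_{\mathcal F}$ is bigraded; hence $N_{\mathcal F}$ splits according to \emph{which} odd generators appear. A monomial whose odd part equals $\tau_{c_{i_1}}\cdots\tau_{c_{i_t}}\kappa_{j_1}\cdots\kappa_{j_s}$ has a nonzero class in $N_{\mathcal F}$ precisely when $c_{i_1},\dots,c_{i_t}$ span a cone $C\in\mathcal F$ (by the square free relations $\mathcal J_{\mathcal F}$ defining $D_{\mathcal F}$), the $G_{j_1},\dots,G_{j_s}$ can be reordered into a flag $\underline G$ (by the ideal $K$), and $C\subset V_{G_{j_1}}$ (by the first family of generators of $\Theta_{\mathcal F}$, using that $V_{G_{j_1}}\subseteq V_{G_{j_r}}$ for every $r$). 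These are exactly the pairs $(\underline G,C)$ indexing the nonempty strata $Y_{(\underline G,C)}$ of $Y(X_{\mathcal F})$, and the degree of the monomial $\tau_{c_{i_1}}\cdots\tau_{c_{i_t}}\kappa_{j_1}\cdots\kappa_{j_s}$ is $t+s=n_{(\underline G,C)}$, which is the shift occurring in the summand $H^*(Y_{(\underline G,C)},\Q)[-n_{(\underline G,C)}]$ of $M_{\mathcal F}$.

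Next I would identify the coefficient of such an odd monomial in $N_{\mathcal F}$ with $H^*(Y_{(\underline G,C)},\Q)$: this coefficient is $B_{C,\mathcal F}[t_1,\dots,t_m]$ modulo the relations inherited from $\Theta_{\mathcal F}$, $K$ and $\mathcal J_{\mathcal F}$ after cancelling $\tau_{c_{i_1}}\cdots\tau_{c_{i_t}}\kappa_{j_1}\cdots\kappa_{j_s}$, and one checks that these residual relations are exactly those of Theorem~\ref{teorelationsstrata}. Concretely, $x_ct_j$ and $t_st_r$ (the latter from $K$) together with $F(i,j)$ reproduce the image of the ideal $I$; $x_c\kappa_{j_1}$ reproduces \eqref{prima3}; $\tau_ct_j$ for a ray $c$ of $C$ with $c\notin V_{G_j}$, together with $t_i\kappa_{j_r}$ for $G_i$ incomparable with $\underline G$, reproduce \eqref{prima6}; and $P^{G_{j_r}}_{G_i}(\sum_{h\in B_i}-t_h)\kappa_{j_r}$ reproduces the stratum relations \eqref{prima2}, where one uses that $P^{G_{i_k}}_{G_i}$ divides $P^{G_{j_r}}_{G_i}$ when $G_{i_k}$ is the smallest element of $\underline G$ above $G_i$, so that the single relation for the index $i_k$ implies all the others. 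Summing over $(\underline G,C)$ this gives a bigrading preserving linear isomorphism $\Phi$, and $\Phi$ is an algebra map because the product of $N_{\mathcal F}$ carries the $(\underline G,C)$-summand times the $(\underline G',C')$-summand into the $(\underline G\cup\underline G',C\cup C')$-summand, multiplying the odd monomials in the exterior algebra and the even coefficients along the surjections $B_{C,\mathcal F}\to B_{C\cup C',\mathcal F}$ and the $t$-quotients, i.e.\ by restriction times restriction to $Y_{(\underline G\cup\underline G',C\cup C')}$ --- which is exactly the multiplication of the Morgan algebra of \cite{Mor} (the signs from reordering the odd generators match Morgan's).

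It remains to prove that $\Phi$ is a chain map. On a class $\phi\,\tau_{c_{i_1}}\cdots\tau_{c_{i_t}}\kappa_{j_1}\cdots\kappa_{j_s}$ the differential $d_{N_{\mathcal F}}$ acts through $d_{\mathcal C_{\mathcal F}}$, which replaces some $\tau_{c_{i_r}}$ by $x_{c_{i_r}}$ and lands in the summand for $\bigl(\underline G,\,C(c_{i_1},\dots,\widehat{c_{i_r}},\dots,c_{i_t})\bigr)$, and through $d_{\mathcal B}$, which replaces some $\kappa_{j_r}$ by $t_{j_r}$ and lands in the summand for $(\underline G\setminus\{G_{j_r}\},C)$, each with the alternating sign of the exterior derivative. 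On the other side $d_{M_{\mathcal F}}$ is the alternating sum of the Gysin pushforwards $\iota_*\colon H^*(Y_{(\underline G,C)})\to H^{*+2}(Y_{(\underline G',C')})$ obtained by deleting one of the boundary divisors $D_{c_i}$ or $D_{G_j}$ cutting out $Y_{(\underline G,C)}$. So the identity $\Phi\circ d_{N_{\mathcal F}}=d_{M_{\mathcal F}}\circ\Phi$ reduces to the statement that, through the presentations of Theorem~\ref{teorelationsstrata}, the Gysin map for deleting $D_{c_{i_r}}$ is multiplication by the class $x_{c_{i_r}}$ and the Gysin map for deleting $D_{G_{j_r}}$ is multiplication by $t_{j_r}$; this is the standard fact that pushing forward after restricting equals multiplication by the class of the divisor, combined with the surjectivity of restriction to strata, together with the fact that $[D_c]$ is represented by $x_c$ and $[D_{G_j}]$ by $t_j$ in the presentations of \cite{DCG2}. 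Compatibility with the differentials on the two sides is mirrored algebraically by the identity $d\bigl(P^{G_j}_{G_i}(\sum_{h\in B_i}-t_h)\,\kappa_j\bigr)=P^{G_j}_{G_i}(\sum_{h\in B_i}-t_h)\,t_j=F(i,j)$, i.e.\ $d_{N_{\mathcal F}}$ sends the third family of generators of $\Theta_{\mathcal F}$ into the second, which is the algebraic shadow of the stratum relation \eqref{prima2} being a $d$-image of the corresponding $\kappa$-relation.

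The main obstacle is precisely this last identification: realising the Gysin pushforward from a boundary stratum, with all of Morgan's signs, as multiplication by $x_c$ respectively $t_j$ on the explicit presentations. For the toric divisors $D_c$ this is classical toric geometry, but for the exceptional divisors $D_{G_j}$ it encodes the excess and self-intersection behaviour of the wonderful blow-ups, which is exactly the geometric content packaged in \cite{DCG2} and \cite{mocipagaria2020}; once this is in hand, the verifications that $\Phi$ is well defined, bijective, multiplicative and a chain map are routine bookkeeping with the bigrading and with the ordering conventions for the odd generators.
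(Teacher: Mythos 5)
Your outline is correct and is essentially the paper's approach: the paper's proof consists of the single sentence that the argument of \cite{mocipagaria2020} applies verbatim, and what you have written is a reconstruction of exactly that argument (decompose by odd-generator support, identify each piece with $H^*(Y_{(\underline G,C)},\Q)$ via Theorem~\ref{teorelationsstrata}, and reduce the chain-map condition to the Gysin map being multiplication by the class of the deleted divisor, using the surjectivity of the restrictions to the strata). You have simply supplied the substance that the paper delegates to the reference.
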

\begin{proof}  The proof given in \cite{mocipagaria2020} can be applied verbatim in this more special case.

 \end{proof}
\section{A limit and the rational homotopy type of $M(\mathcal A)$}
\label{sec:limit}

Before we start, let us briefly discuss the generators of $\Theta_{\mathcal F}$. Remark that $c\notin V_{\Gamma_{j}}$ if and only if the function $s_c=\rho_{\mathcal F} (x_c)$ vanishes on $ V_{\Gamma_{j}}$. Indeed the support of $s_c$ is the interior of $S(c)$ the star of $c$ and such interior intersects $V_{\Gamma_{j}}$ if and only if $c\in V_{\Gamma_{j}}$.

Thus the first relations can be written as 
\begin{equation}\label{lefirst}xt_j,  \ \tau  t_j,\ x \kappa_j, \ \tau\kappa_j,\end{equation} for $x,\tau \in \{f\in S_{\mathcal F}| \ \rho_{\mathcal F} (f)\equiv 0$ on $V_{\Gamma_{j}}\}$.

As for the elements 
$$\chi_{\mathcal F}^-=\sum_{c\ \text{ray}}\min(0,\langle \chi,c\rangle) x_c,$$
appearing in the definition of
 $P^{G_j}_{G_i}(t):=\prod_{\chi\in \underline \chi\setminus (\underline \chi\cap \Gamma_j)}(t-\chi_{\mathcal F}^-)$, 
 we remark that $\rho_{\mathcal F}(\chi_{\mathcal F}^-)=\chi^-$ where 
 $$\chi^-(v)=\min (0, \chi(v))$$
 
 for each $v\in V$.
 
 Notice that if $\mathcal G$ is a refinement of $\mathcal F$ an equal sign linear function relative to $\mathcal F$ is also equal sign relative to $\mathcal G$,
  so if $\mathcal F$ is compatible with \(\A\) also $\mathcal G$ is compatible with \(\A\).
 
 The first consequence of this fact is
 
 \begin{proposition}\label{preimmagine} Let $\mathcal F$ be a fan compatible with \(\A\). Let $\mathcal G$ be a refinement of $\mathcal F$ and
  $\psi_\mathcal G^\mathcal F:X_\mathcal G\to X_\mathcal F$   the   unique $T$-equivariant projective morphism  extending the identity on $T$. 

Then,  for each  layer  $K_{\Gamma,\phi}$ of $\mathcal A$, the preimage of the closure of $K_{\Gamma,\phi}$ in $X_\mathcal F$ is 
 the closure of $K_{\Gamma,\phi}$ in $X_\mathcal G.$
  \end{proposition}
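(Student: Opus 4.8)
The plan is to prove the two inclusions separately: the inclusion $W\subseteq(\psi_{\mathcal G}^{\mathcal F})^{-1}(Z)$ by a soft continuity argument, and the reverse inclusion by localizing on the affine toric charts attached to the cones lying in $V_\Gamma$. Here I write $Z$ and $W$ for the closures of $\mathcal K_{\Gamma,\phi}$ in $X_{\mathcal F}$ and in $X_{\mathcal G}$ respectively. The easy inclusion is immediate: since $\psi_{\mathcal G}^{\mathcal F}$ is continuous and restricts to the identity on $T\supseteq\mathcal K_{\Gamma,\phi}$, we get $\psi_{\mathcal G}^{\mathcal F}(W)\subseteq\overline{\psi_{\mathcal G}^{\mathcal F}(\mathcal K_{\Gamma,\phi})}=Z$, that is $W\subseteq(\psi_{\mathcal G}^{\mathcal F})^{-1}(Z)$.

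For the reverse inclusion, first recall (as noted just before the statement) that $\mathcal G$, being a refinement of the compatible fan $\mathcal F$, is again compatible with $\A$, so Theorem \ref{faneH} applies to both fans. Let $\mathcal F_\Gamma$ and $\mathcal G_\Gamma$ be the subfans consisting of the cones contained in $V_\Gamma$, and put $U_{\mathcal F}=\bigcup_{\sigma\in\mathcal F_\Gamma}U_\sigma$ and $U_{\mathcal G}=\bigcup_{\tau\in\mathcal G_\Gamma}U_\tau$, the associated $T$-stable open subsets of $X_{\mathcal F}$ and $X_{\mathcal G}$. By Theorem \ref{faneH}, every $T$-orbit meeting $Z$ (resp. $W$) corresponds to a cone of $\mathcal F_\Gamma$ (resp. $\mathcal G_\Gamma$), hence $Z\subseteq U_{\mathcal F}$ and $W\subseteq U_{\mathcal G}$. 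Moreover I claim $(\psi_{\mathcal G}^{\mathcal F})^{-1}(U_{\mathcal F})=U_{\mathcal G}$: for a toric morphism coming from a refinement one has $(\psi_{\mathcal G}^{\mathcal F})^{-1}(U_\sigma)=\bigcup_{\tau\in\mathcal G,\ \tau\subseteq\sigma}U_\tau$, and by Theorem \ref{faneH}(1), applied to $\mathcal F$ and to $\mathcal G$, a cone $\tau\in\mathcal G$ lies in some cone of $\mathcal F_\Gamma$ precisely when $\tau\subseteq V_\Gamma$, i.e. when $\tau\in\mathcal G_\Gamma$.

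Next, each $\chi\in\Gamma\subset X^*(T)$ is a character of $T$ and hence a rational function on both $X_{\mathcal F}$ and $X_{\mathcal G}$; since $\psi_{\mathcal G}^{\mathcal F}$ is the identity on $T$, these match under $(\psi_{\mathcal G}^{\mathcal F})^*$. On any chart $U_\sigma$ with $\sigma\subseteq V_\Gamma$ both $\chi$ and $-\chi$ lie in $\sigma^\perp\cap X^*(T)$, so $\chi$ is a regular, nowhere vanishing function on $U_{\mathcal F}$, and likewise on $U_{\mathcal G}$; in particular $(\psi_{\mathcal G}^{\mathcal F})^*\chi=\chi$ as regular functions on $U_{\mathcal G}$. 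Using the explicit toric description of the layer closure from Theorem \ref{faneH} (cf. \cite{DCG1}) one checks that inside $U_{\mathcal F}$ the closed set $Z$ coincides with the common level set $\{\,\chi=\phi(\chi)\ \forall\chi\in\Gamma\,\}$, and similarly $W=\{\,\chi=\phi(\chi)\ \forall\chi\in\Gamma\,\}$ inside $U_{\mathcal G}$. Combining the last three facts,
\begin{align*}
(\psi_{\mathcal G}^{\mathcal F})^{-1}(Z)
&=\{\,y\in(\psi_{\mathcal G}^{\mathcal F})^{-1}(U_{\mathcal F}):\ (\psi_{\mathcal G}^{\mathcal F})^*\chi(y)=\phi(\chi)\ \forall\chi\in\Gamma\,\}\\
&=\{\,y\in U_{\mathcal G}:\ \chi(y)=\phi(\chi)\ \forall\chi\in\Gamma\,\}\ =\ W,
\end{align*}
which is the desired equality.

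The point that requires care — and the only place where anything beyond routine toric geometry enters — is the assertion that the naive equations $\chi=\phi(\chi)$, $\chi\in\Gamma$, cut out exactly the closure $Z$ (respectively $W$) inside $U_{\mathcal F}$ (respectively $U_{\mathcal G}$), with no spurious extra components in the boundary. This is where Theorem \ref{faneH} (i.e. the results of \cite{DCG1}) is essential: it identifies $Z$ with the smooth toric variety $X_{\mathcal F_\Gamma}$ for the subtorus $T_\Gamma$, so that for $\tau\in\mathcal F_\Gamma$ the intersection $Z\cap O_\tau$ is a single $T_\Gamma$-orbit of dimension $(n-\mathrm{rk}\,\Gamma)-\dim\tau$; this is a closed subset of the irreducible torus coset $\{\,\chi=\phi(\chi)\ \forall\chi\in\Gamma\,\}\cap O_\tau$ of the same dimension, hence equals it, and one concludes by letting $\tau$ range over $\mathcal F_\Gamma$. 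The argument for $W$ is identical.
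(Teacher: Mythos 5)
Your proof is correct, and it takes a genuinely different route from the paper's. Where you localize on the $T$-stable open sets $U_{\mathcal F}=\bigcup_{\sigma\in\mathcal F_\Gamma}U_\sigma$ and $U_{\mathcal G}=\bigcup_{\tau\in\mathcal G_\Gamma}U_\tau$ and then cut out the closures scheme-theoretically by the equations $\chi=\phi(\chi)$, $\chi\in\Gamma$, the paper works orbit by orbit: for a cone $C\in\mathcal F$ it writes $(\psi_{\mathcal G}^{\mathcal F})^{-1}(\mathcal O_C)$ as the union of orbits $\mathcal O_{C'}$ over cones $C'\in\mathcal G$ whose relative interior sits inside that of $C$, identifies the restricted map $\mathcal O_{C'}\to\mathcal O_C$ with the torus projection $T/T_{\Gamma_{C'}}\to T/T_{\Gamma_{C}}$ (fiber $T_{\Gamma_C}/T_{\Gamma_{C'}}$), and uses the inclusion $T_{\Gamma_{C'}}\subseteq T_{\Gamma_C}\subseteq T_\Gamma$ together with Theorem~\ref{faneH}(4) to conclude that $(\psi_{\mathcal G}^{\mathcal F})^{-1}(Z_{\mathcal F})\cap\mathcal O_{C'}\subseteq Z_{\mathcal G}$, the easy inclusion $Z_{\mathcal G}\subseteq(\psi_{\mathcal G}^{\mathcal F})^{-1}(Z_{\mathcal F})$ being implicit. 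Both arguments lean on exactly the same input from \cite{DCG1}, namely Theorem~\ref{faneH} applied to both fans (and the remark that a refinement of a compatible fan is compatible). The paper's version makes the group-theoretic structure of the fibers transparent and avoids any scheme-theoretic claims; your version is arguably cleaner and more self-contained once the ``cut out by the correct equations, no extra boundary components'' step is justified, which you do carefully via the dimension count comparing the $T_\Gamma$-orbit $Z\cap\mathcal O_\tau$ (known from Theorem~\ref{faneH}(4b)) with the irreducible coset $\{\chi=\phi(\chi)\;\forall\chi\in\Gamma\}\cap\mathcal O_\tau$. One small point worth making explicit in your last paragraph: the coset $\{\chi=\phi(\chi)\}\cap\mathcal O_\tau$ is irreducible because $\Gamma$, being a split direct summand of $X^*(T)$ contained in $\tau^\perp$, remains a split direct summand of $X^*(T)\cap\tau^\perp=X^*(\mathcal O_\tau)$; without that observation the ``same dimension hence equal'' step is incomplete.
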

\begin{proof} Fix the layer $K_{\Gamma,\phi}$ of $\mathcal A$. Clearly we can identify $K_{\Gamma,\phi}$ with the torus $T_{\Gamma}=\cap_{\chi\in\Gamma}\ker e^{2\pi\imath\chi} $ and the restriction $\psi_\mathcal G^\mathcal F$
to $K_{\Gamma,\phi}$ is the identity. 

We know by the compatibility of   $\mathcal F$  and $\mathcal G$   with \(\A\), that a cone $C$ in  $\mathcal F$ or in
$\mathcal G$
has either relative interior disjoint from $V_\Gamma$ or it is contained in $V_\Gamma$.

The cones contained in $V_\Gamma$ define   smooth  projective fans $\mathcal F_\Gamma $ and $\mathcal G_\Gamma$ respectively and $\mathcal G_\Gamma$ is a refinement of  $\mathcal F_\Gamma $.

We can then identify  the closure of  $Z_\mathcal F$ (resp. $Z_\mathcal G$) of $K_{\Gamma,\phi}$ in $X_\mathcal F$ (resp. $X_\mathcal G$ ) with the $T_\Gamma$ variety associated to the fan $\mathcal F_\Gamma $ (resp. $\mathcal G_\Gamma $) and the restriction of  $\psi_\mathcal G^\mathcal F$ to  $Z_\mathcal G$ with the unique $T_\Gamma$ equivariant morphism extending the identity on  $K_{\Gamma,\phi}$ (identified with $T_\Gamma$).

We need to prove that  $(\psi_\mathcal G^\mathcal F)^{-1}(Z_\mathcal F)=Z_\mathcal G$. In order to see this let us take a cone $C\in\mathcal F$. Consider the new $T$-orbit $\mathcal O_C\subset  X_\mathcal F$ corresponding to $C$. We know that  $(\psi_\mathcal G^\mathcal F)^{-1}(\mathcal O_C)$ is the union of the $T$-orbits $\mathcal O_{C'}\subset  X_\mathcal G$ corresponding to the cones  $C'\in \mathcal G$ whose relative interior is contained in the relative interior of $ C$. 

Also  as a $\mathcal O_C\simeq T/T_{\Gamma_C}$, were $$T_{\Gamma_C}=\cap_{\chi\in X^*( T),\langle \chi , C\rangle =0}\ker e^{2\pi\imath\chi},$$
the restriction of the map $\psi_\mathcal G^\mathcal F$ to a $T$-orbit $\mathcal O_{C'}\subset  X_\mathcal G$ corresponding to a cone $C'\in \mathcal G$ whose relative interior is contained in the relative
 interior of $ C$ can be then identified with the projection $T/T_{\Gamma_{C'}}\to T/T_{\Gamma_C}$ whose fiber is the torus $T_{\Gamma_C}/T_{\Gamma_{C'}}$.
 
 Having recalled these facts let us examine the preimage of $Z_\mathcal F$. Fix a cone $C\in\mathcal F$.
 
  If $C$ is not contained in $V_\Gamma$ then $\mathcal O_C\cap Z_\mathcal F=\emptyset$, so no orbit $\mathcal O_{C'}\subset  X_\mathcal G$ corresponding to a cone $C'\in \mathcal G$ whose relative interior is contained in the relative interior of $ C$ intersects $(\psi_\mathcal G^\mathcal F)^{-1}(Z_\mathcal F)$.
  
  If $C$ is  contained in $V_\Gamma$  then also every cone $C'\in \mathcal G$ whose relative interior is contained in the relative interior of $ C$ is contained in $V_\Gamma$. Furthermore we have inclusions
 $ T_{\Gamma_{C'}}\subset T_{\Gamma_{C}}\subset T_{\Gamma}$. From this and the description of the restriction of the map $\psi_\mathcal G^\mathcal F$ to  $\mathcal O_{C'}$, it follows  that 
 $(\psi_\mathcal G^\mathcal F)^{-1}(Z_\mathcal F)\cap \mathcal O_{C'}\subset Z_\mathcal G$, proving our claim.
\end{proof}

 At this point we observe that the universal property of Blowing up (see, \cite{Hart}, pp.164-165) implies that we get a morphism 
 $$\nu_\mathcal G^\mathcal F:Y(X_{\mathcal G})\to Y(X_{\mathcal F})$$ extending the identity of $\emme(\A)$.
 
The map  $\nu_\mathcal G^\mathcal F$ then induces a homomorphism \[\Phi_\mathcal G^\mathcal F:N_\mathcal F\to N_\mathcal G\]
 of differential graded algebras having the following properties:
\(\Phi_\mathcal G^\mathcal F\) coincides with \(\chi_\mathcal G^\mathcal F\) on $\mathcal C_\mathcal F$ and one has $\Phi_\mathcal G^\mathcal F(t_i)=t_i$, $\Phi_\mathcal G^\mathcal F(\kappa_i)=\kappa_i$ for every $i=1,\ldots, m$. 

%
%
%
%
%
%
%
%
%
%

 
 Let us now remark that  the function $\chi^-$ depends only on $\chi$ and not on the choice of any particular fan. This allows us to define the polynomial 
  $P^{G_j}_{G_i}(t):=\prod_{\chi\in \underline \chi\setminus (\underline \chi\cap \Gamma_j)}(t-\chi^-)\in \mathcal C[t]$
 We can then consider the algebra $\mathcal C\otimes \mathcal B$ with differential $d_{\mathcal C}\otimes 1+1\otimes  d_{\mathcal B}$. 

For any $\Gamma_j$ we consider the subspace $S_{\Gamma_j}$ which is the image modulo $V^*$ of the space of functions whose restriction to  $V_{\Gamma_{j}}$ is linear.

Now remark that the space $\Sigma$  surjects on  both $\mathcal C_{(2,0)}$ and  $C_{(0,1)}$. So, given $\upsilon \in \Sigma$, we may take the corresponding elements $s_\upsilon\in \mathcal C_{(2,0)}$ and 
$\sigma_\upsilon\in \mathcal C_{(0,1)}$.

In $\mathcal C\otimes \mathcal B$ we then take    the ideal $\Theta $ generated by the elements 
\begin{enumerate}
\item $s_\upsilon\ t_j$,  $ \sigma_\upsilon t_j$, $s_\upsilon\kappa_j$, $\sigma_\upsilon \kappa_j$, for $\upsilon \in S_{\Gamma_j}.$ 

\item
 $F(i,j)$, for $G_j\supsetneq G_i$,  $i=1,\ldots m$, $j=1,\ldots ,m+1$. 
\item  $P^{G_{j}}_{G_i}(\sum_{h\in B_i}-t_{h}) \kappa_j,$ with   \(i\in \{1,...,m\}\), and    \(j\in \{1,...,m\}\) in such a way that  \( G_i\subsetneq G_j\).
\end{enumerate}
and set $N=\mathcal C\otimes \mathcal B/\Theta$.

Observe that $\Theta$  is preserved by the differential $d_{\mathcal C}\otimes 1+1\otimes  d_{\mathcal B}$. It follows that we get an induced  differential $d_{ N}$ on  $ N$. 

We know that $\Theta$ is a graded ideal so that $\mathcal N$ is also graded and the differential $d_{\mathcal N}$ is of degree 1.

From \eqref{defini} and the above observations we then deduce

\begin{theorem} 
\label{teo:limite}
$(N,d_{N})=\varinjlim_{\mathcal F}(N_{\mathcal F},d_{N_{\mathcal F}}).$\end{theorem}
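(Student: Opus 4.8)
The strategy is to check that the direct limit of the directed system $(N_{\mathcal F}, \Phi_{\mathcal G}^{\mathcal F})$ is computed term by term from the presentations. Concretely, we already have $(\mathcal C, d_{\mathcal C}) = \varinjlim_{\mathcal F}(\mathcal C_{\mathcal F}, d_{\mathcal C_{\mathcal F}})$ from \eqref{defini}, and the algebra $\mathcal B$ (together with its differential $d_{\mathcal B}$) is independent of $\mathcal F$; hence $\mathcal C\otimes\mathcal B = \varinjlim_{\mathcal F}(\mathcal C_{\mathcal F}\otimes\mathcal B)$ as differential graded algebras, the connecting maps being $\chi_{\mathcal G}^{\mathcal F}\otimes\mathrm{id}$. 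Since taking a quotient by a compatible system of ideals commutes with direct limits, it suffices to prove that under the identification $\mathcal C\otimes\mathcal B = \varinjlim(\mathcal C_{\mathcal F}\otimes\mathcal B)$ the ideal $\Theta$ is precisely the image of $\varinjlim_{\mathcal F}\Theta_{\mathcal F}$, i.e. that $\Theta = \bigcup_{\mathcal F}(\text{image of }\Theta_{\mathcal F}\text{ in }\mathcal C\otimes\mathcal B)$.

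First I would record that the connecting maps $\Phi_{\mathcal G}^{\mathcal F}\colon N_{\mathcal F}\to N_{\mathcal G}$ are well defined morphisms of differential graded algebras — this is the content of the discussion preceding the theorem, where $\Phi_{\mathcal G}^{\mathcal F}$ restricts to $\chi_{\mathcal G}^{\mathcal F}$ on $\mathcal C_{\mathcal F}$ and fixes the $t_i$ and $\kappa_i$; one needs only to observe that $(\chi_{\mathcal G}^{\mathcal F}\otimes\mathrm{id})(\Theta_{\mathcal F})\subseteq\Theta_{\mathcal G}$, which follows generator by generator. The key point here is that $\chi_{\mathcal G}^{\mathcal F}$ sends $x_c$ and $\tau_c$ to non-negative combinations of the $y_d,\upsilon_d$ (as in the proof that $\overline\xi_{\mathcal G}^{\mathcal F}(\mathcal J_{\mathcal F})=0$), and that $\chi_{\mathcal G}^{\mathcal F}(\chi_{\mathcal F}^-)=\chi_{\mathcal G}^-$ since $\rho_{\mathcal F}(\chi_{\mathcal F}^-)=\chi^-$ is a fan-independent function — so the generators of type (1), (2), (3) of $\Theta_{\mathcal F}$ map into the corresponding generators of $\Theta_{\mathcal G}$.

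Next I would match the three families of generators of $\Theta$ with the limits of the three families of generators of the $\Theta_{\mathcal F}$. For family (2) and (3): the polynomials $F(i,j)$ and $P^{G_j}_{G_i}(\sum_{h\in B_i}-t_h)\kappa_j$ defined over $\mathcal C$ are, by the remark that $\chi^-$ is fan-independent and is the image of $\chi_{\mathcal F}^-$, exactly the images under $\mu_{\mathcal F}$ (resp.\ $\nu_{\mathcal F}$) of the corresponding elements over $\mathcal C_{\mathcal F}$; so these generators already live in the image of a single $\Theta_{\mathcal F}$. For family (1): given $\upsilon\in S_{\Gamma_j}$, one chooses (as in the proof that $\mathcal E\simeq D$) a compatible fan $\mathcal F$ and ray data so that $\upsilon$ is represented by some $x\in S_{\mathcal F}$ with $\rho_{\mathcal F}(x)\equiv 0$ on $V_{\Gamma_j}$ — which, as noted at the start of Section \ref{sec:limit}, is equivalent to $x$ being a combination of the $x_c$ with $c\notin V_{\Gamma_j}$ — so $s_\upsilon t_j,\ \sigma_\upsilon t_j,\ s_\upsilon\kappa_j,\ \sigma_\upsilon\kappa_j$ are images of generators of $\Theta_{\mathcal F}$ of type \eqref{lefirst}. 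Conversely every generator of every $\Theta_{\mathcal F}$ maps into $\Theta$. This gives $\Theta = \varinjlim_{\mathcal F}\Theta_{\mathcal F}$ as ideals, and hence $N=\mathcal C\otimes\mathcal B/\Theta = \varinjlim_{\mathcal F}(\mathcal C_{\mathcal F}\otimes\mathcal B/\Theta_{\mathcal F}) = \varinjlim_{\mathcal F}N_{\mathcal F}$, compatibly with the differentials since each identification above is one of differential graded algebras.

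**Main obstacle.** The delicate step is the bookkeeping for family (1): one must be careful that an element of $S_{\Gamma_j}$ — defined intrinsically as the image modulo $V^*$ of functions linear on $V_{\Gamma_j}$ — can genuinely be realized, after passing to a suitable compatible refinement, by a linear combination of ray-variables $x_c$ with all rays $c$ lying outside $V_{\Gamma_j}$, and that this realization is compatible across refinements so that the union over $\mathcal F$ is an ideal and not merely a union of ideals. This uses that the condition $\rho_{\mathcal F}(x_c)\equiv 0$ on $V_{\Gamma_j}$ is exactly $c\notin V_{\Gamma_j}$ (recorded at the beginning of Section \ref{sec:limit}) together with the fact that refinements of compatible fans remain compatible. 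Everything else is a routine direct-limit argument, since limits are exact and commute with tensor products and quotients.
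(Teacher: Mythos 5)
Your argument is correct and follows exactly the line the paper intends (the paper's own ``proof'' is just the sentence ``From \eqref{defini} and the above observations we then deduce''): you identify $\mathcal C\otimes\mathcal B=\varinjlim(\mathcal C_{\mathcal F}\otimes\mathcal B)$, check that the connecting maps $\chi_{\mathcal G}^{\mathcal F}\otimes\mathrm{id}$ carry $\Theta_{\mathcal F}$ into $\Theta_{\mathcal G}$ and that $\Theta=\varinjlim\Theta_{\mathcal F}$ generator-by-generator, using precisely the two remarks opening Section~\ref{sec:limit} (the intrinsic rewriting \eqref{lefirst} of the type-(1) generators, and the fan-independence of $\chi^-$), and then pass to the quotient. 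The only small inaccuracy is your emphasis on the non-negativity of the coefficients in $\chi_{\mathcal G}^{\mathcal F}(x_c)$: that is the mechanism for $\overline\xi_{\mathcal G}^{\mathcal F}(\mathcal J_{\mathcal F})=0$, but for $\chi_{\mathcal G}^{\mathcal F}(\Theta_{\mathcal F})\subseteq\Theta_{\mathcal G}$ what matters is that $\chi_{\mathcal G}^{\mathcal F}(x_c)$ represents the same function $s_c$ (so vanishing on $V_{\Gamma_j}$ is preserved), which is the intrinsic description \eqref{lefirst} you already invoke.
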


We now want to remark a few facts about the algebras \((N_{\mathcal F},d_{N_{\mathcal F}})\) and \((N,d_{N})\). First of all we have seen that although the set of generators of the defining ideal of  \(N_{\mathcal F}\) depends on the choice of equal sign bases, by Theorem \ref{propindependent}(i) the algebra itself is independent on the choice  of these equal sign bases. 


Assume now that we have given two smooth projective fans $\mathcal F$ and $\mathcal F'$ each equipped with a choice of  equal sign bases for our arrangement. We know that we can find a common refinement $\mathcal G$ of  $\mathcal F$ and $\mathcal F'$. We have already remarked that   the two sets of    equal sign bases are both choices of  equal sign bases for $\mathcal G$, so that the algebra \((N_{\mathcal G},d_{N_{\mathcal G}})\) is independent from these choices.

We deduce
\begin{proposition}\label{independence} The differential graded algebra \((N,d_{N})\) is independent from any choice of equal sign bases.\end{proposition}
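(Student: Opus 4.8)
The plan is to deduce Proposition \ref{independence} from Theorem \ref{teo:limite} together with the independence statement already established for the finite-level algebras. Since $(N,d_N)=\varinjlim_{\mathcal F}(N_{\mathcal F},d_{N_{\mathcal F}})$, the only data that could a priori enter the construction of $N$ beyond the combinatorial data of $\A$ is the choice of an equal sign basis attached to each fan $\mathcal F$ appearing in the directed system. So I must show that two different global choices of equal sign bases produce canonically isomorphic limit algebras.

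First I would fix notation: let $\{\underline\chi_{\mathcal F}\}_{\mathcal F}$ and $\{\underline\chi'_{\mathcal F}\}_{\mathcal F}$ be two systems of equal sign bases, one chosen for every smooth projective fan compatible with $\A$, and let $(N,d_N)$ and $(N',d_{N'})$ be the corresponding limit algebras. The key observation, already recorded in the text just before the statement, is that if $\mathcal G$ refines $\mathcal F$ then an equal sign linear function relative to $\mathcal F$ is also equal sign relative to $\mathcal G$; hence any equal sign basis for a layer relative to $\mathcal F$ remains an equal sign basis relative to $\mathcal G$. Consequently, given the two systems, for each fan $\mathcal F$ I pass to a common refinement (of $\mathcal F$ with itself is trivial, but the point is cofinality): the fans form a directed set under refinement, and for a cofinal family of fans both $\underline\chi_{\mathcal F}$ and $\underline\chi'_{\mathcal F}$ are simultaneously valid choices of equal sign bases. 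By Theorem \ref{propindependent}(i) (and its extension to the strata via Theorem \ref{teorelationsstrata}, hence to the Morgan algebra $M_{\mathcal F}\cong N_{\mathcal F}$), the algebra $(N_{\mathcal F},d_{N_{\mathcal F}})$ does not depend on which of the two equal sign bases is used — the defining ideal $\Theta_{\mathcal F}$, although presented using a choice of basis, is the same ideal. So along this cofinal family the two directed systems $\{(N_{\mathcal F},d_{N_{\mathcal F}})\}$ coincide on the nose, and the transition maps $\Phi_{\mathcal G}^{\mathcal F}$ (which on $\mathcal C_{\mathcal F}$ are $\chi_{\mathcal G}^{\mathcal F}$ and fix the $t_i,\kappa_i$) are likewise independent of the basis choice. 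Taking the direct limit over this cofinal family and using that a cofinal subsystem has the same colimit, I conclude $(N,d_N)=(N',d_{N'})$.

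The main step to get right is the cofinality/uniformity argument: one must check that it is legitimate to reduce to fans on which \emph{both} basis systems are valid, i.e. that such fans are cofinal among all smooth projective fans compatible with $\A$. This follows because any two smooth projective fans admit a common smooth projective refinement (used repeatedly already in Section \ref{sec:conditions}), refinement preserves compatibility with $\A$, and — crucially — refinement preserves the equal sign property of the chosen bases; so every fan is dominated by one on which whichever finitely many bases we care about are all equal sign. I expect the only real subtlety is bookkeeping: making sure that the assertion ``$\Theta_{\mathcal F}$ is independent of the equal sign basis'' is invoked not just for $H^*(Y(X_{\mathcal F}))$ but for all the strata cohomologies entering $M_{\mathcal F}$, which is covered by the cited results of \cite{DCG2} and \cite{mocipagaria2020}, and then transporting this through the isomorphism $N_{\mathcal F}\cong M_{\mathcal F}$ and through the colimit. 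Once that is in place the conclusion is immediate.
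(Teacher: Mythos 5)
Your proposal is correct and follows essentially the same route as the paper: invoke Theorem~\ref{propindependent}(i) for the independence of each finite-level algebra $(N_{\mathcal F},d_{N_{\mathcal F}})$, pass to a common refinement to compare two choices (the paper's version of your cofinality step), and then take the limit. You are a bit more explicit than the paper about tracking the independence through the strata cohomologies and the isomorphism $N_{\mathcal F}\cong M_{\mathcal F}$, but this is a refinement of the same argument, not a different one.
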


Let us now recall that according to \cite{Mor}, for smooth projective fans $\mathcal F$ as above the minimal model of the differential graded algebra \((N_{\mathcal F},d_{N_{\mathcal F}})\)  is isomorphic to the minimal model of \(\emme(\A)\) so it determines the rational homotopy type of \(\emme(\A)\) and in particular its cohomology is the cohomology ring $H^*(\emme(\A),\mathbb Q)\). Furthermore as we have already seen   the homomorphism  \(\Phi_\mathcal G^\mathcal F\) is induced by the morphism $\nu_\mathcal G^\mathcal F:Y(X_{\mathcal G})\to Y(X_{\mathcal F})\)  which is the identity when restricted to \(\emme(\A)\), so \(\Phi_\mathcal G^\mathcal F\)  is a quasi isomorphism of differential graded algebras. 

Since. taking homology of a chain complex is an exact functor we deduce that also the cohomology of the differential graded algebra \((N,d_{N})\) is $H^*(\emme(\A),\mathbb Q)\) and that the natural maps \((N_{\mathcal F},d_{N_{\mathcal F}})\to (N,d_{N})\) are quasi isomorphisms.

We deduce 
\begin{proposition}\label{quasifinale} The minimal model  of the differential graded algebra \((N,d_N)\)  is isomorphic to the minimal model of \(\emme(\A)\).\end{proposition}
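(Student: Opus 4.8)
The plan is to deduce Proposition \ref{quasifinale} directly from the chain of facts assembled just before it, so the proof is essentially a bookkeeping argument that packages the discussion into a clean statement. First I would recall the starting point provided by Morgan's theory, as cited from \cite{Mor}: for each smooth projective fan $\mathcal F$ compatible with $\A$, the Morgan differential algebra $(M_{\mathcal F}, d_{M_{\mathcal F}})$ of the pair $(Y(X_{\mathcal F}), Y(X_{\mathcal F})\setminus \emme(\A))$ has minimal model isomorphic to the Sullivan minimal model of $\emme(\A)$; by the isomorphism $(N_{\mathcal F}, d_{N_{\mathcal F}}) \simeq (M_{\mathcal F}, d_{M_{\mathcal F}})$ established in the previous section, the same holds for $(N_{\mathcal F}, d_{N_{\mathcal F}})$. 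In particular the cohomology $H^*(N_{\mathcal F}, d_{N_{\mathcal F}})$ is $H^*(\emme(\A), \Q)$.

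Next I would invoke Theorem \ref{teo:limite}, which identifies $(N, d_N)$ with the direct limit $\varinjlim_{\mathcal F}(N_{\mathcal F}, d_{N_{\mathcal F}})$ over the directed system of smooth projective fans compatible with $\A$ (such fans being cofinal among all smooth projective fans, since any refinement of a compatible fan is compatible). The transition maps $\Phi_{\mathcal G}^{\mathcal F}$ are induced by the morphisms $\nu_{\mathcal G}^{\mathcal F}:Y(X_{\mathcal G})\to Y(X_{\mathcal F})$ restricting to the identity on $\emme(\A)$, so each $\Phi_{\mathcal G}^{\mathcal F}$ is a quasi-isomorphism of differential graded algebras. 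Since cohomology commutes with filtered colimits of cochain complexes (taking homology is exact and filtered colimits are exact in the category of abelian groups), we get $H^*(N, d_N) = \varinjlim_{\mathcal F} H^*(N_{\mathcal F}, d_{N_{\mathcal F}}) = H^*(\emme(\A), \Q)$, and moreover each natural map $(N_{\mathcal F}, d_{N_{\mathcal F}})\to (N, d_N)$ is a quasi-isomorphism: indeed in the colimit the map on $H^*$ is the colimit of the (iso)morphisms $H^*(\Phi_{\mathcal G}^{\mathcal F})$, hence an isomorphism.

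Finally I would conclude using the fact that quasi-isomorphic differential graded algebras over $\Q$ (here all connected and of finite type in each degree, so that Sullivan's theory applies without subtlety) have isomorphic minimal models: since $(N_{\mathcal F}, d_{N_{\mathcal F}})\to (N, d_N)$ is a quasi-isomorphism and the minimal model of $(N_{\mathcal F}, d_{N_{\mathcal F}})$ is the minimal model of $\emme(\A)$, the minimal model of $(N, d_N)$ is isomorphic to the minimal model of $\emme(\A)$. By Proposition \ref{independence} this is moreover independent of all auxiliary choices. The main obstacle, and the only genuinely substantive point, is justifying the interchange of minimal-model formation with the direct limit — more precisely, confirming that $(N, d_N)$, despite being ``rather large'' and not the Morgan algebra of any compactification, still lies within the scope of rational homotopy theory (connectedness, the role of nilpotence of $\emme(\A)$, and that a quasi-isomorphism of such algebras induces an isomorphism of minimal models). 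Everything else is a direct appeal to Theorem \ref{teo:limite}, the quasi-isomorphism property of the $\Phi_{\mathcal G}^{\mathcal F}$, and exactness of filtered colimits.
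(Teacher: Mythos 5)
Your proposal is correct and follows essentially the same route as the paper: Morgan's theorem gives that each $(N_{\mathcal F}, d_{N_{\mathcal F}})$ has the minimal model of $\emme(\A)$, the transition maps $\Phi_{\mathcal G}^{\mathcal F}$ are quasi-isomorphisms because they are induced by maps restricting to the identity on $\emme(\A)$, exactness of filtered colimits makes $(N_{\mathcal F},d_{N_{\mathcal F}})\to (N,d_N)$ a quasi-isomorphism, and quasi-isomorphic DGAs share a minimal model. Your added remarks on cofinality of compatible fans and on the formal applicability of Sullivan theory are reasonable caveats that the paper leaves implicit but do not change the argument.
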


To state our final result in its more general form, let us now consider an arbitrary toric arrangement $\mathcal A$ (i.e. we drop the assumption \(\A=\tilde \A\)). We recall that, according to the definition in the  Introduction,  the  {\em combinatorial data}  of  $\mathcal A$ are provided by the  following two sets:
\begin{enumerate} \item The partially ordered set $\tilde {\mathcal A}$  ordered by reverse inclusion.
\item The set of lattices $\Gamma$ for $\mathcal K_{\Gamma,\phi}\in \tilde {\mathcal A}$.
\end{enumerate} 
We have: 
 \begin{theorem}\label{finale} The rational homotopy type of the complement   \(\emme(\A)\) depends only on the combinatorial data of \(\A\).\end{theorem}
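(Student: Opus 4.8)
The plan is to read off the rational homotopy type of $\emme(\A)$ from the explicit presentation of $(N,d_N)$ furnished by Theorem~\ref{teo:limite} and Section~\ref{sec:limit}, and then to verify term by term that this presentation is built only from the combinatorial data. Since $\emme(\A)=\emme(\tilde\A)$ and the combinatorial data of $\A$ is by definition that of $\tilde\A$, we may assume $\A=\tilde\A$. By Proposition~\ref{quasifinale} the rational homotopy type of $\emme(\A)$ is the isomorphism class of the minimal model of the differential graded algebra $(N,d_N)$; so it suffices to produce, from an isomorphism of the combinatorial data of two arrangements $\A$ and $\A'$ (that is, a lattice isomorphism $X^*(T)\xrightarrow{\sim}X^*(T')$ carrying the sublattices $\Gamma_i$ attached to the elements of $\tilde\A$ to those attached to $\tilde\A'$ and inducing a poset isomorphism $\tilde\A\xrightarrow{\sim}\tilde\A'$), an isomorphism $(N,d_N)\xrightarrow{\sim}(N',d_{N'})$ of differential graded algebras.

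I would then inspect the three ingredients of $N=\mathcal C\otimes\mathcal B/\Theta$. The differential graded algebra $(\mathcal C,d_{\mathcal C})$ of Section~\ref{sec:differentialgraded} is constructed purely from the torus $T$: it is the direct limit over the smooth projective fans of $V=X_*(T)\otimes\R$ of the algebras $D_{\mathcal F}$, quotiented by a basis of $X^*(T)$; hence it depends only on the lattice $X^*(T)$. Next, $\mathcal B=\mathbb Q[t_1,\dots,t_m]\otimes\bigwedge(\kappa_1,\dots,\kappa_m)/K$, together with its differential, depends only on $m$ and on the comparability relation among the $G_i$; and, because $\A=\tilde\A$, the building set $\elle'=\{G_1,\dots,G_m\}$, with its order and with the assignment $G_i\mapsto\Gamma_i$, is canonically identified with $\tilde\A$ ordered by reverse inclusion, endowed with its attached lattices — indeed distinct layers have distinct closures in $X_{\mathcal F}$, and $G_i\subsetneq G_j$ there precisely when $\mathcal K_{\Gamma_i,\phi_i}\subset\mathcal K_{\Gamma_j,\phi_j}$ (obtained by intersecting with the open torus). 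Finally the generators of $\Theta$ listed in Section~\ref{sec:limit} involve only: the subspaces $S_{\Gamma_j}\subset\mathcal C_{(2,0)}$, $S_{\Gamma_j}\subset\mathcal C_{(0,1)}$, which depend only on $V_{\Gamma_j}$, hence only on $\Gamma_j$; the polynomials $P^{G_j}_{G_i}(t)=\prod_{\chi}(t-\chi^-)$, whose coefficients are symmetric functions of the $\chi^-$ for $\chi$ running over a $\Z$-basis $\underline\chi$ of $\Gamma_i$ that extends a $\Z$-basis of $\Gamma_j$ — such a basis exists because $\Gamma_j$ is a direct summand of $\Gamma_i$ (as $\Gamma_i/\Gamma_j$ embeds in the free group $X^*(T)/\Gamma_j$), and $\chi^-(v)=\min(0,\chi(v))$ is intrinsic to $\chi\in X^*(T)$; the subsets $B_i=\{h\ :\ G_h\subseteq G_i\}$, read off the poset; and the variables $t_h,\kappa_j$. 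In particular no character $\phi_i$ occurs.

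The only non-canonical input in this description is the system of bases $\underline\chi$ attached to the comparable pairs $(i,j)$; by Proposition~\ref{independence} (the limit form of Theorem~\ref{propindependent}(i)) the differential graded algebra $(N,d_N)$ does not depend on it. Putting the three points together: an isomorphism of combinatorial data induces an isomorphism $\mathcal C\xrightarrow{\sim}\mathcal C'$ of differential graded algebras and an isomorphism $\mathcal B\xrightarrow{\sim}\mathcal B'$, and, after transporting a compatible system of bases, carries the generators of $\Theta$ to those of $\Theta'$, hence descends to an isomorphism $(N,d_N)\xrightarrow{\sim}(N',d_{N'})$. Its minimal model is then isomorphic to the minimal model of $\emme(\A')$ by Proposition~\ref{quasifinale}, so $\emme(\A)$ and $\emme(\A')$ have the same rational homotopy type.

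I expect the main obstacle to lie in the middle step: making precise, without circularity, that the abstract poset $\elle'$ together with its labelling $G_i\mapsto\Gamma_i$ is recovered from $\tilde\A$ and the lattices with no residual dependence on the auxiliary compatible fan $\mathcal F$ or on the $\phi_i$, and checking that the relations defining $\Theta$ are exactly the ``fan-free'' limits of the relations of Theorem~\ref{propindependent} and Theorem~\ref{teorelationsstrata}, so that nothing fan-dependent enters $\Theta$. Granting this, the functoriality of $\mathcal C$, $\mathcal B$ and $\Theta$ in the combinatorial data is routine bookkeeping.
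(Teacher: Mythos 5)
Your proposal is correct and takes essentially the same approach as the paper: invoke Proposition~\ref{quasifinale} to reduce the claim to the observation that \((N,d_N)\) is built from the combinatorial data alone, then verify this by inspecting the presentation from Theorem~\ref{teo:limite}. The paper treats the verification as a one-line remark; you usefully spell it out (noting in particular that \(\mathcal C\) depends only on \(X^*(T)\), that \(\mathcal B\) depends only on the poset, that no \(\phi_i\) enters \(\Theta\), and that Proposition~\ref{independence} absorbs the apparent dependence on the choice of equal sign bases), but there is no difference in strategy.
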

 \begin{proof} By Proposition \ref{quasifinale} the rational homotopy type of the complement   \(\emme(\A)\)  depends only by the differential graded algebra \((N,d_N)\). In turn we have that this algebra is defined only in terms of the combinatorial data of $\mathcal A$. Hence our claim follows.\end{proof}

\addcontentsline{toc}{section}{References}
\bibliographystyle{acm}
\bibliography{Bibliogpre} 
\end{document}